\newcommand{\bvec}[1]{\mbox{\boldmath $#1$}}
\newtheorem{prop}{Proposition}[section]
\newtheorem{thm}{Theorem}[section]
\newcommand{\mGamma}[2]{
\raise1.5pt\hbox{$\displaystyle \mathop{\Gamma}^{\rm m}$}\hspace{-1.5pt}_{#1}^{\, #2}}
\newcommand{\mH}[2]{
\raise1.5pt\hbox{$\displaystyle \mathop{H}^{\rm m}$}\hspace{-1.5pt}_{#1}^{\, #2}}
\newcommand{\mnabla}[2]{
\raise1.5pt\hbox{$\displaystyle \mathop{\nabla}^{\rm m}$}
\hspace{-1.5pt}_{#1}^{\, #2}}
\newcommand{\eGamma}[2]{
\raise1.5pt\hbox{$\displaystyle \mathop{\Gamma}^{\rm e}$}
\hspace{-1.5pt}_{#1}^{\, #2}}
\newcommand{\enabla}[2]{
\raise1.5pt\hbox{$\displaystyle \mathop{\nabla}^{\rm e}$}\hspace{-1.5pt}_{#1}^{\, #2}}
\begin{document}
%
\title{Bayes Extended Estimators \\ for Curved Exponential Families}
%
%
%

\author{Michiko~Okudo,
		and~Fumiyasu~Komaki,~\IEEEmembership{Member,~IEEE}
\thanks{Manuscript received January xx, 20xx; revised January xx, 20xx; accepted
January xx, 20xx. Date of publication January xx, 20xx; date of current version January xx, 20xx.
This work was supported in part by JSPS KAKENHI Grant Number JP18J10499,
MEXT KAKENHI Grant number 16H06533, JST CREST Grant Number JPMJCR1763,
and AMED Grant Number JP19dm0207001.
}
\thanks{
M. Okudo is with the Department of Mathematical Informatics, The University of Tokyo, Tokyo 113-8656, Japan
(e-mail: {okudo@mist.i.u-tokyo.ac.jp}).}
\thanks{F. Komaki is with the Department of Mathematical Informatics, The University of Tokyo, Tokyo 113-8656, Japan,
and also with the RIKEN Center for Brain Science, Wako 351-0198, Japan (e-mail: komaki@mist.i.u-tokyo.ac.jp).}
}

%
%

\markboth{Journal of \LaTeX\ Class Files,~Vol.~xx, No.~x, August~20xx}%
{Shell \MakeLowercase{\textit{et al.}}: Bare Demo of IEEEtran.cls for IEEE Journals}
%



\maketitle

\begin{abstract}
The Bayesian predictive density has complex representation and does not belong to any finite-dimensional statistical model except for in limited situations.
In this paper, we introduce its simple approximate representation employing its projection onto a finite-dimensional exponential family.
Its theoretical properties are established parallelly to those of the Bayesian predictive density when the model belongs to curved exponential families.
It is also demonstrated that the projection asymptotically coincides with the plugin density with the posterior mean of the expectation parameter of the exponential family, which we refer to as the Bayes extended estimator.
Information-geometric correspondence indicates that the Bayesian predictive density can be represented as the posterior mean of the infinite-dimensional exponential family.
The Kullback--Leibler risk performance of the approximation is demonstrated by numerical simulations and it indicates that the posterior mean of the expectation parameter approaches the Bayesian predictive density as the dimension of the exponential family increases.
It also suggests that approximation by projection onto an exponential family of reasonable size is practically advantageous with respect to risk performance and computational cost.
\end{abstract}

\begin{IEEEkeywords}
Bayesian prediction,
curved exponential family,
information geometry.
\end{IEEEkeywords}

%
\IEEEpeerreviewmaketitle

\section{Introduction}
Constructing predictive densities is a fundamental problem in statistical analysis that aims at predicting the behavior of future samples using past observations.
Let us suppose that we have observations
$x^n = \{x(1),x(2),\dots,x(n)\}$
that are independently distributed according to a probability distribution with density function $p(x;\omega)$ that belongs to a statistical model
\begin{align*}
\mathcal{P}
=\left\{p(x;\omega) \mid \omega \in \Omega \right\}.
\end{align*}
The objective is to provide the predictive density of $y = x(n+1)$ that is independently distributed according to the same density $p(y;\omega)$.
We adopt the Kullback--Leibler divergence
\[
D\{p(y;\omega);\hat{p}(y;{\omega}) \} = \int p(y;\omega)\log \frac{p(y;\omega)}{\hat{p}(y;{\omega})}dy
\]
as a loss function of a predictive density $\hat{p}(y;{\omega})$.
Then, the risk function and the Bayes risk with respect to a prior $\pi(\omega)$ can be written as
\begin{align*}
 &{E}[D\{p(y;\omega);\hat{p}(y;{\omega}) \}]\\
 &=\int p(x^n;\omega)D\{p(y;\omega);\hat{p}(y;{\omega}) \}dx^n,
\end{align*}
and
\[
\int\pi(\omega) \int p(x^n;\omega)D\{p(y;\omega);\hat{p}(y;{\omega}) \}dx^n d\omega,
\]
respectively.

Except for in limited cases, the Bayesian predictive distribution
does not belong to any finite-dimensional model,
which makes it intractable to obtain the full density  although it is optimal with respect to the Bayes risk.
The Bayesian predictive density is defined by
\[
 \hat{p}_\pi(y \mid x^n) = \int p(y;\omega)p_\pi(\omega \mid x^n)d\omega
\]
where $p_\pi(\omega \mid x^n)$ is the posterior density
\[
 p_\pi(\omega \mid x^n) = \frac{p(x^n;\omega)\pi(\omega)}{\int p(x^n;\omega)\pi(\omega)d\omega}
\]
of $\omega$.
It is shown in \cite{aitchison1975} that the Bayesian predictive density is optimal with respect to the Bayes risk in terms of the Kullback--Leibler divergence in the family of all probability densities, which we denote as $\mathcal{F}$.
However, the full Bayesian predictive density is intractable in most problems due to the complex representation that involves averaging plugin densities about the model parameters.
It is not included in the model $\mathcal{P}$ or even  in any finite-dimensional model in most problems, while plugin densities are always included in $\mathcal{P}$ as they are constructed by plugging-in an estimator $\hat{\omega}(x^n)$ to the model.

In the present paper, we represent the Bayesian predictive density as the infinite-dimensional limit of a parameterized distribution of an exponential family.
We demonstrate that the Bayesian predictive density can be considered as an infinite-dimensional extension of the plugin density with the posterior mean of the expectation parameter of an exponential family.
It is shown that theoretical properties including optimality with respect to the Bayes risk are appropriately retained by this extension.
The plugin density with the posterior mean of the expectation parameter coincides with the projection of the Bayesian predictive density onto the exponential family with respect to the Fisher metric.
It is shown that it approaches the Bayesian predictive density closer with respect to the risk as the projected exponential family increases.
There is also information-geometrical correspondence between the Bayesian predictive density and the projected density that comes from the correspondence between $\mathcal{F}$ and the exponential family.
In practice, the Bayesian predictive density can be  computationally approximated, for example, by taking the mean of plugin densities using Markov chain Monte Carlo simulations, or by performing an approximation of the posterior density using methods like the Laplace method, although the objective of this research is not to develop a complex approximation based on computational methods.
Apart from computational approximations, a class of empirical Bayes predictive densities is proposed for multivariate normal models in \cite{xu2011} to avoid the intractable implementations of Bayesian predictive densities.
Rather than constructing an approximation of the Bayesian predictive density that has good performance in terms of risk, we aim to formulate a simple interpretation of the Bayesian predictive density that maintains its theoretical properties on a finite-dimensional model.

The outline of the construction of the approximate predictive densities is explained below.
We consider a model of a subspace of an exponential family as $\mathcal{P}$, namely we consider a statistical model of a curved exponential family
\begin{align*}
\mathcal{P}
= &\{p(x;\omega)=s(x)
\exp(\theta^i(\omega) x_i-\Psi(\theta(\omega)))  \mid \\
&\omega=(\omega^a)\in \Omega, a=1,\dots,d, \ i=1,\dots,m \},
\end{align*}
where $\Omega \subset \mathbb{R}^d$ and $1 \leq d \leq m$.
The model $\mathcal{P}$ parametrized by $\omega$ is embedded in an exponential family parametrized by $\theta$, thus here we represent $\theta$ as $\theta(\omega)$.
Summation over a repeated index is automatically taken according to Einstein's summation convention:
if an index occurs as an upper and lower index in one term, then the summation is implied.
Curved exponential families embedded in exponential families can express a variety of models including network models (e.g., \cite{hunter2006}) and time series models (e.g., \cite{ravishanker1990}).
They can also be applied to stochastic processes \cite{kuchler2006}.
We consider predictive densities in a finite-dimensional full exponential family
\begin{align*}
\mathcal{E}
= &\{ p(x;\theta)=s(x)\exp(\theta^i x_i-\Psi(\theta)) \mid \theta=(\theta^i)\in \Theta, \\
& i=1,\dots,m \}~~(\Theta\subseteq \mathbb{R}^m)
\end{align*}
that includes the original curved exponential model $\mathcal{P}$.
We refer to plugin densities in $\mathcal{E}$ as extended plugin densities.
The inclusion relation is
$\mathcal{P} \subseteq \mathcal{E} \subseteq \mathcal{F}$
and we consider the middle layer of the three-layer structure.
The coordinate system $\theta=(\theta^i) \ (i=1,\dots,m)$ is called the natural parameter of exponential families.
Another coordinate system $\eta = (\eta_i)$ defined by
\begin{align}
\label{eq:eta}
 \eta_i  = {E}(x_i) = \frac{\partial}{\partial \theta^i}\Psi(\theta) ~~~~~ (i=1,\dots,m)
\end{align}
is called the expectation parameter.
The posterior mean of $\eta$ is closely related to the Bayesian predictive density,
and the extended plugin density with the posterior mean of $\eta$ is considered in this paper.
Based on the idea of covering $\mathcal{F}$ by extending exponential families, we specify the models that can be embedded in exponential families, and the theoretical properties described in the following sections are based on this embedding.
The policy of expressing a probability density by extending  exponential families has been investigated, for example, in \cite{barron1991}, in which a log-density is approximated using series of polynomials and the rate of convergences is obtained.
It should be noted that the practical advantage illustrated in the numerical experiments in Section \ref{sec:numerical} can be attained for other models if it is possible to find an appropriate exponential family onto  which we project the Bayesian predictive density.

\begin{table}[h]
\begin{center}
\caption{{All probability densities and a full exponential family}}
  \begin{tabular}{l|l} \hline
 {$\mathcal{F}$: all probability densities} & {$\mathcal{E}$: a full exponential family} \\  \hline
{$p(x)$: m-representation} & {$\eta$: m-affine parameter} \\
{$\log p(x)$: e-representation} & {$\theta$: e-affine parameter} \\
{infinite dimensional model} & {finite dimensional model} \\ \hline 
  \end{tabular}
  \label{tab:infinite}
\end{center}
\end{table}
From the viewpoint of information geometry, the posterior mean of $\eta$ can be considered as the correspondence to the Bayesian predictive density in $\mathcal{E}$.
Table \ref{tab:infinite} represents the infinite-finite correspondence of m and e representations.
Here, ``m" and ``e" are short notations denoting ``mixture" and ``exponential," respectively.
Exponential families and mixture families are important dual families in information geometry, and their typical representations are denoted as m-representation and e-representation, respectively.
Concerning exponential families, 
the e-representation is $\log p(x) = \theta^i x_i-\Psi(\theta) + \log s(x),$ and 
$\theta$ is called the e-affine parameter, as the basis vector fields $(\partial/\partial \theta_ i)p(x)~(i=1,\dots,m)$ are parallel vector fields with respect to the e-connection as defined in Section \ref{sec:preliminaries} of this paper.
Concerning mixture families, the m-representation can be written as  $p(x)=\eta_i q^i(x) + c(x)$, and $\eta$ is the affine coordinate system about the m-connection (also defined in Section \ref{sec:preliminaries}).
We can also set the m-affine coordinate system in exponential families, and it is defined by (\ref{eq:eta}).
Here, the m-affine (or e-affine) parameters are the finite-dimensional typical representations of m-representation (or e-representation, respectively).
The Bayesian predictive density is the posterior mean about the m-representation (that is, density functions), and its finite-dimensional correspondence is the posterior mean about $(\eta_i)$.
Since the Bayesian predictive density is optimal in the infinite-dimensional exponential family, we might expect that the posterior mean of $(\eta_i)$ exhibit the same properties as the Bayesian predictive density, such as  optimality with respect to the Bayes risk.

The properties of the posterior mean of ${\eta}$ are investigated in following sections as follows.
In Section \ref{section_bayes_estimator}, we show that the extended plugin density with the posterior mean of $\eta$ is optimal with respect to the Bayes risk in the finite-dimensional exponential family.
We denote the posterior mean of $\eta$ as the Bayes extended estimator.
In Section \ref{subsec:projection}, the extended plugin density with the Bayes extended estimator is proved to be the projection of the Bayesian predictive density onto $\mathcal{E}$ in terms of the Fisher metric.
In Section \ref{sec:optimal_shift}, its optimality with respect to the risk along orthogonal shift from the model is shown to be common with the property of optimality in the case of the Bayesian predictive density.
The relation between the projection angle under the Fisher metric and the risk difference between the Bayesian predictive density and the extended plugin density with the Bayes extended estimator is investigated.
In Section \ref{sec:numerical}, we compare the risk performance of the Bayes estimator, the Bayes extended estimator, and the Bayesian predictive density by conducting numerical simulations on the Gaussian spiked covariance models.
We confirm that the projection angle converges to zero as the dimension of $\mathcal{E}$ increases.
The simulation results also suggest that the projection of the Bayesian predictive density is practically effective in approximating it with respect to the Kullback--Leibler risk and the computational cost.

\section{Preliminaries}
\label{sec:preliminaries}
In this section, we prepare some information-geometric notions.
For details of the notions and notation concerning the differential geometry of curved exponential families, refer to \cite{amari1985}.

Let $a,b,\dots$ be indices for $\omega$.
Let $T_\omega \mathcal{P}$ be the tangent space of $\mathcal{P}$ at a point $\omega$.
The tangent space $T_\omega \mathcal{P}$ is identified with the vector space spanned by $\partial_a p(x;\omega)~(a=1,\dots,d)$, where $\partial_a$ denotes $\partial/\partial \omega^a$.
We define inner products in the tangent space by
\begin{align}
\label{inner product}
{
\langle \partial_a p(x;\omega), \partial_b p(x;\omega) \rangle = \int \frac{\partial_a p(x;\omega)\partial_b p(x;\omega)}{p(x;\omega)}dx.
}
\end{align}
In a statistical model
$\mathcal{P},$ each component of the Fisher information matrix is defined by
\[
g_{ab}(\omega) = \langle \partial_a p(x;\omega), \partial_b p(x;\omega) \rangle.
\]
Let $g^{ab}$ be a component of the inverse matrix of $(g_{ab})$.
Then, e-connection and the m-connection coefficients are defined as:
\begin{align*}
\eGamma{abc}{}(\omega)
&=\int p(x;\omega)\{\partial_a \partial_b \log p(x;\omega)\}\{\partial_c \log p(x;\omega)\}dx
\end{align*}
and
\begin{align*}
\mGamma{abc}{}(\omega)
&=\int \frac{\partial_a \partial_b p(x;\omega) \partial_c p(x;\omega)}
{p(x;\omega)}dx,
\end{align*}
respectively.
We define
\[
 \eGamma{ab}{~c}=\eGamma{abd}{} g^{dc},~~\mGamma{ab}{~c}=\mGamma{abd}{} g^{dc},
\]
and
\begin{align*}
T_{abc}
=&\mGamma{abc}{}-\eGamma{abc}{}\\
=&\int p(x;\omega)\{\partial_a\log p(x;\omega)\}
\{\partial_b\log p(x;\omega)\}\\
&\times\{\partial_c\log p(x;\omega)\}
dx.
\end{align*}
The Jeffreys prior density is given by
\[
 \pi_J(\omega)=\sqrt{|g(\omega)|},
\]
where $|g(\omega)|$ is the determinant of the matrix $(g_{ab}(\omega))$.

The coordinate systems $(\theta^i)$ and $(\eta_i)$ of the exponential family $\mathcal{E}$ are
dual to each other in the sense that
\begin{align}
\label{dual}
\left\langle \frac{\partial}{\partial \theta^i}p(x;\theta),  \frac{\partial}{\partial \eta_j}p(x;\theta) \right\rangle = \delta^i_j
\end{align}
where $\delta^i_j$ is the Kronecker delta.
In a curved exponential family, e-connection and m-connection coefficients
are expressed as:
\begin{align}
\label{mGamma simple}
\eGamma{abc}{} = (\partial_{a}\partial_{b}\theta^i) (\partial_c \eta_i) \mbox{~~and~~}
\mGamma{abc}{} = (\partial_{a}\partial_{b}\eta_i) (\partial_c \theta^i),
\end{align}
respectively.

{
In the rest of the paper, we assume regularity conditions to ensure that equalities such as
\[
\int \rm{o}_p(1)dy = \rm{o}(1), ~\int \rm{O}_p(n^{-1})dy = \rm{O}(n^{-1}),
\]
hold.
For the details of the regularity conditions, see \cite{hartigan1998}.
}

\section{Main results}
\subsection{Optimality with respect to Bayes risk}
\label{section_bayes_estimator}
The posterior mean of $\eta$ (we denote as $\bar{\eta}_\pi$) of $\mathcal{E}$ is evaluated as follows:
\[
\bar{\eta}_\pi=\frac{\int\eta(\omega) p(x^n;\eta(\omega))\pi(\omega)d\omega}{\int p(x^n;\eta(\omega))\pi(\omega)d\omega}.
\]
Note that $\bar{\eta}_\pi(x^n) \neq \eta(\bar{\omega}_\pi(x^n))$ in general where $\bar{\omega}_\pi(x^n)$ is the posterior mean of $\omega$:
\[
\bar{\omega}_\pi=\frac{\int\omega p(x^n;\omega)\pi(\omega)d\omega}{\int p(x^n;\omega)\pi(\omega)d\omega}.
\]
We demonstrate that $p(y;\bar{\eta}_\pi)$ is optimal in $\mathcal{E}$
with respect to the Bayes risk based on a prior $\pi$.

\begin{prop}
\label{thm_E-Bayes}
The Bayes risk of $p(y;\hat{\eta})$, where $\hat{\eta}$ is an estimator of $\eta$, is minimized when $\hat{\eta}=\bar{\eta}_\pi.$
\end{prop}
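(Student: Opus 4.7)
The plan is to expand the Kullback--Leibler loss using the explicit exponential family form, swap the order of integration so the posterior mean appears, and reduce the whole problem to a pointwise convex minimization in $\theta$. The key structural fact driving everything is that $\log p(y;\theta)$ is affine in the sufficient statistic $y$, so its expectation against $p(y;\omega)$ is affine in $\eta(\omega)$.

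First, for any estimator $\hat{\eta}(x^n)$ with corresponding natural parameter $\hat{\theta}=\theta(\hat{\eta})$, I would rewrite
\begin{align*}
D\{p(y;\omega);p(y;\hat{\eta})\}
&= \int p(y;\omega)\log p(y;\omega)\,dy \\
&\quad - \int p(y;\omega)\bigl[\log s(y)+\hat{\theta}^i y_i-\Psi(\hat{\theta})\bigr]dy \\
&= C(\omega) - \hat{\theta}^i\eta_i(\omega) + \Psi(\hat{\theta}),
\end{align*}
where $C(\omega)$ collects every term independent of $\hat{\eta}$. Crucially, $\hat{\eta}$ enters only through the expression $-\hat{\theta}^i\eta_i(\omega)+\Psi(\hat{\theta})$, which is linear in $\eta(\omega)$.

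Next, I would insert this expression into the Bayes risk and apply Fubini to exchange the $\omega$- and $x^n$-integrations, which produces
\[
\int m(x^n)\bigl[-\hat{\theta}^i(x^n)\,\bar{\eta}_{\pi,i}(x^n)+\Psi(\hat{\theta}(x^n))\bigr]dx^n + \mathrm{const},
\]
where $m(x^n)=\int p(x^n;\omega)\pi(\omega)d\omega$ is the marginal density, and the posterior mean $\bar{\eta}_{\pi,i}(x^n)$ appears precisely because the integrand is linear in $\eta(\omega)$. Minimization over the estimator therefore reduces, for each fixed $x^n$, to minimizing the scalar function
\[
f(\theta)=-\theta^i\bar{\eta}_{\pi,i}(x^n)+\Psi(\theta)
\]
over $\theta$, equivalently over $\eta$ via the bijection between natural and expectation parameters.

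Finally, $\Psi$ is the log-normalizer of a full exponential family and is therefore strictly convex with $\partial_{\theta^i}\Psi(\theta)=\eta_i$, so the first-order condition $\partial f/\partial\theta^i=-\bar{\eta}_{\pi,i}(x^n)+\eta_i=0$ identifies the unique global minimizer as $\eta=\bar{\eta}_\pi(x^n)$. The only subtlety I anticipate is verifying that $\bar{\eta}_\pi(x^n)$ lies in the interior of the attainable expectation-parameter set so that $\theta(\bar{\eta}_\pi)$ is well defined; this follows from convexity of the expectation parameter space of a full exponential family together with the fact that $\bar{\eta}_\pi$ is a posterior convex combination of points $\eta(\omega)$ in that set. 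Beyond this routine check, no genuine obstacle arises.
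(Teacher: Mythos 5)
Your proposal is correct and follows essentially the same route as the paper: expand the Kullback--Leibler loss using the exponential-family form so that it is affine in $\eta(\omega)$, pass the posterior average inside so only $\bar{\eta}_\pi$ appears, and minimize pointwise in $x^n$. The only cosmetic difference is the final step, where the paper completes the posterior-averaged loss to $D\{p(y;\bar{\eta});p(y;\hat{\theta})\}$ plus an $\hat{\theta}$-independent remainder, while you invoke strict convexity of $\Psi$ and the first-order condition $\partial\Psi/\partial\theta^i=\eta_i$ directly; these are equivalent, and your added remark that $\bar{\eta}_\pi$ lies in the convex expectation-parameter set is a reasonable extra check the paper leaves implicit.
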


\begin{proof}
Let $\hat{\theta}$ be an estimator of $\theta$.
Note that $\theta$ and $\eta$ are functions of $\omega$.
The Kullback--Leibler loss of ${p}(y;\hat{\theta})$ is
\begin{align*}
&D\{p(y;\theta(\omega));{p}(y;\hat{\theta}) \}\\
&=\int p(y;\theta)\log\left(\frac{\exp(\theta^i y_i -\Psi(\theta))}
{\exp(\hat{\theta}^i y_i -\Psi(\hat{\theta}))}\right)dy\\
&= (\theta^i-\hat{\theta}^i)\eta_i-(\Psi(\theta)-\Psi(\hat{\theta})).
\end{align*}
Hence
\begin{align}
\label{eq:prop_1}
&\int p_\pi(\omega \mid x^n) D\{p(y;\theta(\eta));{p}(y;\hat{\theta}(\eta))\}d\omega \nonumber \\
&=\ (\overline{\theta^i\eta_i}-\hat{\theta}^i\overline{\eta_i}) - (\overline{\Psi(\theta)}-\Psi(\hat{\theta})) \nonumber \\
&=\ ({\theta^i(\overline{\eta})}-\hat{\theta}^i)\overline{\eta}^i - ({\Psi(\theta(\overline{\eta}))}-\Psi(\hat{\theta})) \nonumber \\
& \ \ \ \ + \left( -\theta^i(\overline{\eta})\overline{\eta_i} + \overline{\theta^i\eta_i} + \Psi(\theta(\overline{\eta})) - \overline{\Psi(\theta)}\right) \nonumber \\
&=\ D\{p(y;\overline{\eta});{p}(y;\hat{\theta}) \} \nonumber\\
& \ \ \ \ + \left( -\theta^i(\overline{\eta})\overline{\eta}^i + \overline{\theta^i\eta_i} + \Psi(\theta(\overline{\eta})) - \overline{\Psi(\theta)}\right),
\end{align}
where, for a function $f(\eta)$,
\[
\overline{f(\eta)}= \int p_\pi(\omega \mid x^n) f(\eta)d\omega.
\]
It is minimized when $\hat{\theta}=\theta(\overline{\eta})=\theta(\overline{\eta}_\pi)$.
{
By multiplying (\ref{eq:prop_1}) with
$\int p(x^n;\omega)\pi(\omega)d\omega$
and then integrating with respect to $x^n$,}
 it is shown that $p(y; \bar{\eta}_\pi)$ is optimal with respect to the Bayes risk in $\mathcal{E}$.
\end{proof}

We refer to $\bar{\eta}_\pi$ as the Bayes extended estimator.
Hereinafter, we denote the Bayes extended estimator and the Bayes estimator of $\omega$ about a prior $\pi$ as $\hat{\eta}_\pi(=\bar{\eta}_\pi)$ and $\hat{\omega}_\pi$, respectively. 

From Proposition \ref{thm_E-Bayes}, the extended plugin density with $\hat{\eta}_\pi$ is 
the projection of the Bayesian predictive density onto $\mathcal{E}$ about the Bayes risk.
It is nearest to the Bayesian predictive density in $\mathcal{E}$ regarding the Bayes risk, because the Bayesian predictive density is optimal about the Bayes risk in $\mathcal{F}$.
In fact, $p(y,\hat{\eta}_\pi)$ coincides with the projection of the Bayesian predictive density onto $\mathcal{E}$ regarding the Fisher metric asymptotically, as shown in Section \ref{subsec:projection}.

The choice of $\mathcal{E}$ does not require to be fixed, and we can consider situations in which the size of the extended model $\mathcal{E}$ can be increased, for example, by employing sequences of exponential families as in \cite{barron1991} and \cite{barron1992}.
In those situations, the extended plugin density with the Bayes extended estimator $\hat{\eta}_\pi$ approaches the Bayesian predictive density as $\mathcal{E}$ grows, as $\mathcal{E}$ approaches the set of all probability distributions $\mathcal{F}$.

Here, we use a simple example to illustrate the difference of the plugin density with $\hat{\omega}_\pi$, the extended plugin density with $\hat{\eta}_\pi$, and the Bayesian predictive density. \vspace{2mm}

\noindent {Example ~(Fisher circle model)}
We consider two dimensional Gaussian distribution $\mathrm{N}(\mu,I_2)$
with unknown mean vector $\mu$ and the identity covariance matrix $I_2$.
The density function is
\begin{align*}
p(x;\mu)=&\frac{1}{2\pi}\exp
\left[-\frac{1}{2}\left\{(x_1-\mu_1)^2 + (x_2-\mu_2)^2 \right\}\right]\\
=&\frac{1}{2\pi}\exp\left(-\frac{1}{2}(x_1^2 + x_2^2)\right) \\
 & \times \exp\left(x_1\mu_1 + x_2\mu_2 - \frac{1}{2}\left(\mu_1^2 + \mu_2^2 \right)\right).
\end{align*}
When the mean vector $\mu$ is expressed as
\[\mu_1 = \cos\omega, \mu_2 = \sin \omega,\]
the one-dimensional submodel is called the Fisher circle model. Here, the following holds:
\begin{align*}
\theta^1 = \eta_1 =\mu_1&, \ \theta^2 = \eta_2 =\mu_2.
\end{align*}

Then, we derive the Bayes estimator $\hat{\omega}_\pi$, the Bayes extended estimator $\hat{\eta}_\pi$, and the Bayesian predictive density.
For \(x^n = \{x(1),x(2),\dots,x(n)\}\),
\begin{align*}
&p(x^n;\omega)\\
&= \prod_{t=1}^n \frac{1}{2\pi}\exp\left(-\frac{||x(t)-\mu(\omega)||^2}{2} \right)\\
&= \frac{1}{(2\pi)^n}
\exp\left(-\frac{\sum_{t=1}^n (x_1(t)^2+x_2(t)^2)}{2}+\frac{n}{2}(\bar{x}_1^2+\bar{x}_2^2)\right)\\
& \ \ \ \ \times \exp\left(-\frac{n}{2}||\bar{x}-\mu(\omega)||^2 \right)
\end{align*}
where \(\bar{x}=\sum^n_{t=1}x(t)/n\).
Let \(\bar{x}=(\|\bar{x}\|\cos\phi, \|\bar{x}\|\sin \phi)^\top\).
{Then by the law of cosines,}
\begin{align*}
&-\frac{n}{2}\|\bar{x}-\mu(\omega)\|^2 \\
&= n||\bar{x}||\cos(\omega-\phi)
+ {\rm (terms~independent~of~}\omega)
\end{align*}
and
\[
\int_0^{2\pi} \exp(n \|\bar{x}\| \cos(\omega-\phi))d\omega = 2\pi I_0(n\|\bar{x}\|)
\]
where \(I_0(\cdot)\) is the modified Bessel function of the first kind.
See \cite{fisher1973} (pp.\ 138--140) for the {details}.
When the uniform prior
\[
\pi(\omega)\propto 1
\]
is adopted, the posterior density is
\[
p_\pi(\omega \mid x^n) = \frac{1}{2\pi I_0(n\|\bar{x}\|)}\exp(n\|\bar{x}\|\cos(\omega-\phi)).
\]
It follows that the plugin of the Bayes estimator is $\mathrm{N}\left(\frac{\bar{x}}{\|\bar{x}\|},I_2\right)$:
\[
p(y;\hat{\omega}_\pi) = \frac{1}{2\pi}\exp\left(-\frac{1}{2}\left\|y- \frac{\bar{x}}{\|\bar{x}\|}\right\|^2\right).
\]
The extended plugin with the Bayes extended estimator is $\mathrm{N}\left(\frac{I_1(n \|\bar{x}\|)}{I_0(n \|\bar{x}\|)} \frac{\bar{x}}{\|\bar{x}\|},I_2\right)$:
\begin{align*}
p(y;\hat{\eta}_\pi) = \frac{1}{2\pi}\exp\left(-\frac{1}{2}\left\|y- \frac{I_1(n \|\bar{x}\|)}{I_0(n \|\bar{x}\|)} \frac{\bar{x}}{\|\bar{x}\|} \right\|^2\right),
\end{align*}
where $\hat{\eta}_\pi$ is not included in the circle parametrized by $\omega$ and $\hat{\eta}_\pi \neq \eta(\hat{\omega}_\pi)$.
Here \(I_1(\cdot)\) is the modified Bessel function of the first kind.
On the other hand, the Bayesian predictive density is given by
\[
p_\pi(y\mid x^n) = \frac{1}{2 \pi}\frac{I_0(\|y+n \bar{x}\|)}{I_0(n \|\bar{x}\|)}
 \exp \biggl\{ - \frac{1}{2} (\|y\|^2+1) \biggr\}.
\]
Therefore, $p_\pi(y\mid x^n)$ is not included in $\mathcal{P}$ or $\mathcal{E}$
because it is not a two-dimensional Gaussian with a covariance matrix $I_2$.

\subsection{Projection of Bayesian predictive densities in terms of Fisher metric}
\label{subsec:projection}
Here, we demonstrate that $p(y;\hat{\eta}_\pi)$ is the projection of the Bayesian predictive density regarding the Fisher metric.
It is shown via asymptotic expansion of $p(y;\hat{\eta}_\pi)$, which is represented as a point in $\mathcal{E}$ that is parallelly and orthogonally shifted from the plugin density of the maximum likelihood estimator of $\mathcal{P}$ as shown in Figure \ref{projection}.
\begin{figure}[H]
\centering
\includegraphics[width=0.9\columnwidth]{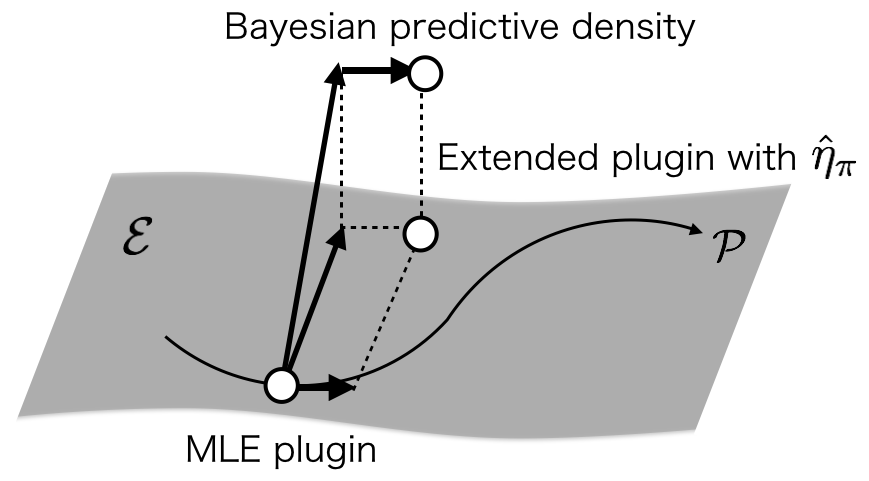}
\caption{
The extended plugin density with $\hat{\eta}_\pi$ is the projection of the Bayesian predictive density onto $\mathcal{E}$.
It is constructed by shifting from the plugin density with the maximum likelihood estimator (MLE).
}
\label{projection}
\end{figure}

Here, we proceed to obtain the asymptotic expansion of $\hat{\eta}_\pi$ around $\eta(\hat{\omega}_{\rm MLE})$.
\begin{thm}
\label{thm_estimator}
The Bayes extended estimator based on a prior $\pi(\omega)$ is expanded as
\begin{align*}
&\hat{\eta}_\pi\\
&=
\eta(\hat{\omega}_{\rm MLE})\\
&~~+\frac{{g}^{ab}(\hat{\omega}_{\rm MLE})}{2n}
\left(\partial_{a}\partial_{b}\eta(\hat{\omega}_{\rm MLE})-\mGamma{ab}{~c}(\hat{\omega}_{\rm MLE})\partial_c {\eta}(\hat{\omega}_{\rm MLE})\right)\\
&~~+\frac{{g}^{ab}(\hat{\omega}_{\rm MLE})}{n}
\left(\partial_b\log\frac{\pi}{{\pi}_J}(\hat{\omega}_{\rm MLE})+\frac{T_b(\hat{\omega}_{\rm MLE})}{2}\right)\\
&~~\times \partial_a{\eta}(\hat{\omega}_{\rm MLE})\\
&~~+\mathrm{o}_p(n^{-1}),
\end{align*}
where
{${\pi}_J$ is the density of the Jeffreys prior and}
$T_a=T_{abc}g^{bc}$.
\end{thm}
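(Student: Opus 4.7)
The plan is to obtain the asymptotic expansion of $\hat{\eta}_\pi = \int \eta(\omega)p(x^n;\omega)\pi(\omega)\,d\omega/\int p(x^n;\omega)\pi(\omega)\,d\omega$ by a Laplace-type expansion around $\hat{\omega}_{\rm MLE}$ and then to convert the raw answer into information-geometric form. The first step is to rescale by $u^a = \sqrt{n}(\omega^a - \hat{\omega}_{\rm MLE}^a)$, on which the posterior concentrates, and Taylor expand the three ingredients around $\hat{\omega}_{\rm MLE}$: the target $\eta(\omega) = \eta_0 + u^a\partial_a\eta/\sqrt{n} + u^au^b\partial_a\partial_b\eta/(2n) + \mathrm{O}(n^{-3/2})$; the log-likelihood $\ell_n(\omega) - \ell_n(\hat{\omega}_{\rm MLE}) = -\tfrac{1}{2}\hat{J}_{ab}u^au^b + \hat{L}_{abc}u^au^bu^c/(6\sqrt{n}) + \mathrm{O}_p(n^{-1})$, where $\hat{J}_{ab} = -n^{-1}\partial_a\partial_b\ell_n|_{\hat{\omega}_{\rm MLE}}$ and $\hat{L}_{abc} = n^{-1}\partial_a\partial_b\partial_c\ell_n|_{\hat{\omega}_{\rm MLE}}$; and $\pi(\omega)/\pi(\hat{\omega}_{\rm MLE}) = 1 + u^a\partial_a\log\pi/\sqrt{n} + \mathrm{O}(n^{-1})$. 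Here and below all derivatives on the right are at $\hat{\omega}_{\rm MLE}$, $\ell_n = \log p(x^n;\omega)$, and $\eta_0 = \eta(\hat{\omega}_{\rm MLE})$.

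Next, I would form the ratio of integrals and apply Gaussian moment identities under the weight $\exp(-\tfrac{1}{2}\hat{J}_{ab}u^au^b)$. Keeping all terms of order $n^{-1}$ and replacing $\hat{J}^{ab}$ by $g^{ab}(\hat{\omega}_{\rm MLE})$, legitimate because the difference is $\mathrm{O}_p(n^{-1/2})$ and is multiplied by $1/n$, the odd moments vanish and only three contributions survive: the quadratic Taylor coefficient of $\eta$; the product of the linear coefficients of $\eta$ and $\log \pi$; and the product of the linear coefficient of $\eta$ with the cubic correction of $\ell_n$, in which the fourth Gaussian moment $\langle u^au^bu^cu^d\rangle$ produces three equal pair contractions that collapse to $3g^{ab}g^{cd}\hat{L}_{bcd}\partial_a\eta$ by total symmetry of $\hat{L}_{bcd}$. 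This yields the raw expansion
\[
 \hat{\eta}_\pi = \eta_0 + \tfrac{g^{ab}}{2n}\partial_a\partial_b\eta + \tfrac{g^{ab}}{n}\partial_a\eta\bigl[\partial_b\log\pi + \tfrac{1}{2}g^{cd}\hat{L}_{bcd}\bigr] + \mathrm{o}_p(n^{-1}).
\]

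The final step is the information-geometric rewriting. Using $\partial_a\partial_b\partial_c\ell(x;\omega) = \partial_a\partial_b\partial_c\theta^i(x_i-\eta_i) - \partial_a\partial_b\theta^i\partial_c\eta_i - \partial_a\partial_c\theta^i\partial_b\eta_i - \partial_a\theta^i\partial_b\partial_c\eta_i$ together with $\bar{x}_i - \eta_i(\hat{\omega}_{\rm MLE}) = \mathrm{O}_p(n^{-1/2})$ gives $\hat{L}_{bcd} = -\eGamma{bcd}{} - \eGamma{bdc}{} - \mGamma{cdb}{} + \mathrm{O}_p(n^{-1/2})$, hence $g^{cd}\hat{L}_{bcd} = -2g^{cd}\eGamma{bcd}{} - g^{cd}\mGamma{cdb}{}$. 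On the other hand, from $\pi_J = \sqrt{|g|}$ and the identity $\partial_b g_{cd} = \eGamma{bcd}{} + \mGamma{bdc}{}$, combined with the total symmetry of $T_{abc} = \mGamma{abc}{} - \eGamma{abc}{}$, one derives $\partial_b\log\pi_J = g^{cd}\eGamma{bcd}{} + T_b/2$. Adding these identities gives
\[
 \partial_b\log\pi + \tfrac{1}{2}g^{cd}\hat{L}_{bcd} = \partial_b\log\tfrac{\pi}{\pi_J} + \tfrac{T_b}{2} - \tfrac{1}{2}g^{cd}\mGamma{cdb}{}.
\]
The residual $-g^{cd}\mGamma{cdb}{}/2$ multiplied by $g^{ab}\partial_a\eta/n$ becomes $-g^{ab}\mGamma{ab}{~c}\partial_c\eta/(2n)$ after relabeling dummies and using $g^{ab}\mGamma{cdb}{} = \mGamma{cd}{~a}$, and merges with $g^{ab}\partial_a\partial_b\eta/(2n)$ into the bracket $\tfrac{g^{ab}}{2n}[\partial_a\partial_b\eta - \mGamma{ab}{~c}\partial_c\eta]$ of the theorem.

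The main obstacle is the bookkeeping in this last step: recognising that $g^{cd}\hat{L}_{bcd}$ and the Jeffreys score conspire to produce $T_b/2$ plus exactly an $\mGamma{ab}{~c}\partial_c\eta$ correction rests on the total symmetry of $T_{abc}$ and the metric identity $\partial_b g_{cd} = \eGamma{bcd}{} + \mGamma{bdc}{}$; a less disciplined expansion easily leaves spurious $\eGamma{}{}$ terms. The regularity conditions cited from \cite{hartigan1998} are used to discard the tails of the integration and to differentiate under the integral, making the above Laplace expansion rigorous at the claimed $\mathrm{o}_p(n^{-1})$ order.
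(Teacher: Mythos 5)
Your proposal is correct and follows essentially the same route as the paper: a Laplace expansion of the ratio of posterior integrals around $\hat{\omega}_{\rm MLE}$, Gaussian moment identities to isolate the three $\eta$-dependent contributions at order $n^{-1}$, and then the identities $\partial_b g_{cd}=\eGamma{bcd}{}+\mGamma{bdc}{}$ and $g^{cd}\partial_b g_{cd}=2\partial_b\log\pi_J$ to recast the result in the stated geometric form. The only (immaterial) difference is that you evaluate $\partial_{bcd}L$ explicitly from the curved-exponential-family structure and discard the $\mathrm{O}_p(n^{-1/2})$ term in $\bar{x}-\eta(\hat{\omega}_{\rm MLE})$, whereas the paper replaces $\partial_{bcd}\hat{L}$ by its expectation via the law of large numbers; both yield the same contraction $g^{cd}\hat{L}_{bcd}=-2\partial_b\log\pi_J+T_b-g^{cd}\mGamma{cdb}{}$.
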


\begin{proof}
See Appendix A.
\end{proof}

We can obtain the asymptotic expansion of the extended plugin density with $\hat{\eta}_\pi$.

\begin{thm}
\label{plug-in expansion}
The extended plugin density with $\hat{\eta}_\pi$ is expanded as
\begin{align*}
&p(y;\hat{\eta}_\pi)\\
&=\ p(y;\eta(\hat{\omega}_{\rm MLE}))\\
&~~+\frac{{g}^{ab}(\hat{\omega}_{\rm MLE})}{2n}
\left(\partial_{a}\partial_{b}\eta_i(\hat{\omega}_{\rm MLE})-\mGamma{ab}{~c}(\hat{\omega}_{\rm MLE})\partial_c {\eta}_i(\hat{\omega}_{\rm MLE})\right)\\
&~~\times \partial^i p(y;\hat{\omega}_{\rm MLE})\\
&~~+\frac{{g}^{ab}(\hat{\omega}_{\rm MLE})}{n}
\left(\partial_b\log\frac{\pi}{\pi_J}(\hat{\omega}_{\rm MLE})+\frac{T_b(\hat{\omega}_{\rm MLE})}{2}\right)\\
&~~\times\partial_a p(y;\hat{\omega}_{\rm MLE})\\
&~~+\mathrm{o}_p(n^{-1}),
\end{align*}
where $\partial^i = {\partial}/{\partial\eta_i}$.
\end{thm}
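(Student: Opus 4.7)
The plan is to combine Theorem~\ref{thm_estimator} with a first-order Taylor expansion of $p(y;\eta)$ about $\eta = \eta(\hat{\omega}_{\rm MLE})$ in the ambient exponential family $\mathcal{E}$, and then simplify the resulting tensor contractions using the chain rule.

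Write $\delta_i := (\hat{\eta}_\pi)_i - \eta_i(\hat{\omega}_{\rm MLE})$. Theorem~\ref{thm_estimator} asserts that $\delta_i = \mathrm{O}_p(n^{-1})$ with an explicit leading term at order $n^{-1}$. Since $\eta$ is a global coordinate system on $\mathcal{E}$ and $\partial^i = \partial/\partial \eta_i$, Taylor's formula in the $\eta$-coordinates gives
\[
p(y;\hat{\eta}_\pi) = p(y;\eta(\hat{\omega}_{\rm MLE})) + \delta_i\,\partial^i p(y;\eta(\hat{\omega}_{\rm MLE})) + R(y),
\]
where the remainder $R(y)$ is quadratic in $\delta$ and hence of order $\mathrm{O}_p(n^{-2}) = \mathrm{o}_p(n^{-1})$.

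Substituting the two components of $\delta_i$ supplied by Theorem~\ref{thm_estimator} separately: the contribution from $(g^{ab}/2n)\bigl(\partial_a \partial_b \eta_i - \mGamma{ab}{~c}\partial_c \eta_i\bigr)$, once contracted with $\partial^i p(y;\hat{\omega}_{\rm MLE})$, reproduces the second summand of the asserted expansion verbatim. For the prior-dependent piece, I would apply the chain-rule identity
\[
(\partial_a \eta_i)(\omega)\,\partial^i p(y;\eta(\omega)) = \partial_a p(y;\omega),
\]
which collapses $(\partial_a \eta_i)\,\partial^i p$ into $\partial_a p$ and thereby produces the third summand, with $\partial_a p(y;\hat{\omega}_{\rm MLE})$ interpreted as the $\omega^a$-derivative of the curved-family density evaluated at the MLE.

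The one nontrivial point concerns the remainder $R(y)$: although $R(y) = \mathrm{O}_p(n^{-2})$ pointwise in $y$, for this expansion to feed into subsequent risk calculations (as needed in Section~\ref{sec:optimal_shift}) one must also ensure $\int R(y)\,\mathrm{d}y = \mathrm{o}(n^{-1})$. This is precisely what the regularity conditions cited at the end of Section~\ref{sec:preliminaries} (adapted from \cite{hartigan1998}) are designed to provide, so the remainder control reduces to a standard verification rather than a novel estimate. Apart from that, the proof is a direct composition of Theorem~\ref{thm_estimator} with a single-term Taylor expansion and the chain rule between $\omega$- and $\eta$-coordinates.
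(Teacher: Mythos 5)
Your proposal is correct and is essentially the paper's own proof: the paper likewise substitutes the expansion of $\hat{\eta}_\pi$ from Theorem~\ref{thm_estimator} into a first-order Taylor expansion of $p(y;\eta)$ in the $\eta$-coordinates and then collapses $(\partial_a \hat{\eta}_i)\,\partial^i p(y;\hat{\eta})$ into $\partial_a p(y;\hat{\eta})$ by the chain rule. Your additional remark on integrating the remainder is exactly what the regularity conditions quoted at the end of Section~\ref{sec:preliminaries} are invoked for, so nothing is missing.
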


\begin{proof}
Symbols such as
$\eta(\hat{\omega}_{\rm MLE}), \partial_a \eta(\hat{\omega}_{\rm MLE})$, and $\partial_a\partial_b \eta(\hat{\omega}_{\rm MLE})$
are abbreviated to
$\hat{\eta}, \partial_a\hat{\eta}$, and $\partial_{ab}\hat{\eta}$, 
respectively.
Considering the asymptotic expansion introduced in Theorem \ref{thm_estimator}, we obtain the following:
\begin{align*}
&p(y;\hat{\eta}_\pi)\\
&= p(y;\hat{\eta})
+\left(\frac{\hat{g}^{ab}}{2n}
\left( \partial_{ab}\hat{\eta}_i - \mGamma{ab}{~c}(\hat{\omega}_{\rm MLE}) \partial_c\hat{\eta}_i \right) \right. \\
&\left. ~+\frac{\hat{g}^{ab}}{n}
\left( \partial_b \log \frac{\hat{\pi}}{\hat{\pi_J}} + \frac{T_{b}(\hat{\omega}_{\rm MLE})}{2}
\right) \partial_a\hat{\eta}_i \right)
\partial^i p(y;\hat{\eta})\\
& ~~+ \mathrm{o}_p(n^{-1})\\
&= p(y;\hat{\eta}) +
\frac{\hat{g}^{ab}}{2n}
\left( \partial_{ab}\hat{\eta}_i - \mGamma{ab}{~c}(\hat{\omega}_{\rm MLE}) \partial_c \hat{\eta}_i \right)\partial^i p(y;\hat{\eta})\\
& ~~+\frac{\hat{g}^{ab}}{n}
\left( \partial_b \log \frac{\hat{\pi}}{\hat{\pi}_J} + \frac{T_{b}(\hat{\omega}_{\rm MLE})}{2}
\right)
\partial_a p(y;\hat{\eta}) + \mathrm{o}_p(n^{-1}).
\end{align*}
\end{proof}

The shift from $p(y;\eta(\hat{\omega}_{\rm MLE}))$ to $p(y;\hat{\eta}_\pi)$ in Theorem \ref{plug-in expansion} is composed of two components,  one ``parallel'' and the other ``orthogonal'' to the model $\mathcal{P}$.
That is, the term
\[
\frac{{g}^{ab}(\hat{\omega}_{\rm MLE})}{n}
\left(\partial_b\log\frac{\pi}{\pi_J}(\hat{\omega}_{\rm MLE})+\frac{T_b(\hat{\omega}_{\rm MLE})}{2}\right)\partial_a p(y;\hat{\omega}_{\rm MLE})
\]
is included in the tangent space spanned by $\partial_a p(y;\eta)~(a=1,\dots,d)$ and the term
\begin{align}
\label{orthogonal shift}
&\frac{{g}^{ab}(\hat{\omega}_{\rm MLE})}{2n}
\left(\partial_{a}\partial_{b}\eta_i(\hat{\omega}_{\rm MLE})-\mGamma{ab}{~c}(\hat{\omega}_{\rm MLE})\partial_c {\eta}_i(\hat{\omega}_{\rm MLE})\right) \nonumber \\
&\times \partial^i p(y;\hat{\omega}_{\rm MLE})
\end{align}
is orthogonal to $\partial_a p(x;\eta)~(a=1,\dots,d)$ with respect to the inner product (\ref{inner product}),
because
\begin{align*}
&\biggl\langle \bigl( \partial_{a}\partial_{b}\eta_i - \mGamma{ab}{~c} \partial_c \eta_i \bigr) \partial^i p(y;\eta), \partial_e p(y;\eta) \biggr\rangle \\
&=
\int
\partial_{a}\partial_{b}\eta_i
\frac{\partial p(y;\eta)}{\partial\eta_i}
\frac{\partial \theta^j}{\partial \omega^e}\frac{\partial p(y;\eta)}{\partial\theta^j}
\frac{1}{p(y;\eta)}dy
-
\mGamma{ab}{~c}g_{ce}\\
&=
\partial_{a}\partial_{b}\eta_i\partial_e \theta^i
-
\mGamma{abe}{}\\
&=0
\end{align*}
by using \eqref{dual} and \eqref{mGamma simple}.

{
We can compare the orthogonal shifts (\ref{orthogonal shift}) to the orthogonal shifts from $p(y;\eta(\hat{\omega}_{\rm MLE}))$ to the Bayesian predictive density, and we show (\ref{orthogonal shift}) is the projection of the orthogonal shifts to the Bayesian predictive density onto $\mathcal{E}$.}
In \cite{komaki1996}, the Bayesian predictive density $p_\pi (y\mid x)$ is asymptotically expanded as
\begin{align*}
&p_\pi (y\mid x)\\
&=\ p(y;{\eta}(\hat{\omega}_{\rm MLE}))
+ \frac{{g}^{ab}(\hat{\omega}_{\rm MLE})}{2n} \\
& ~~\times \left(
\partial_{a}\partial_{b}p(y;\hat{\omega}_{\rm MLE})-\mGamma{ab}{~c}(\hat{\omega}_{\rm MLE})\partial_c p(y;\hat{\omega}_{\rm MLE})\right)\\
&~~+\frac{{g}^{ab}(\hat{\omega}_{\rm MLE})}{n}\\
& ~~\times \left( \partial_b \log \frac{{\pi}}{{\pi}_J}(\hat{\omega}_{\rm MLE}) + \frac{T_{b}(\hat{\omega}_{\rm MLE})}{2}
\right)\partial_a p(y;\eta(\hat{\omega}_{\rm MLE}))\\
&~~+ \mathrm{o}_p(n^{-1}).
\end{align*}
The parallel shift is identical to that of $p(y;\hat{\eta}_\pi)$.
On the other hand, the orthogonal shift
\begin{align}
\label{orthogonal_komaki}
\frac{{g}^{ab}(\hat{\omega}_{\rm MLE})}{2n}\left(
\partial_{a}\partial_{b}p(y;\hat{\omega}_{\rm MLE})-\mGamma{ab}{~c}(\hat{\omega}_{\rm MLE})\partial_c p(y;\hat{\omega}_{\rm MLE})\right)
\end{align}
is different from that of $p(y;\hat{\eta}_\pi)$ and it is not included in the tangent space of $\mathcal{E}$.
Therefore, the shifted density $p_\pi (y\mid x)$ is not included in $\mathcal{E}$, while $p(y;\hat{\eta}_\pi)$ {is in $\mathcal{E}$}.

To cope with the shifts that are orthogonal to $\mathcal{P}$, we introduce a coordinate system to the subspace of $\mathcal{E}$ that is orthogonal to $\mathcal{P}$.
{We divide} the tangent vectors of $\mathcal{E}$ at $\eta$ into two parts, namely, into those parallel to $\mathcal{P}$ and those orthogonal to $\mathcal{P}$.
For each point $\eta\in\mathcal{E}$, the tangent space $T_{\eta}\mathcal{E}$ is identified with the vector space spanned by
\[
\frac{\partial}{\partial \eta_i} p(x;\eta)~(i=1,\dots,m).
\]
The tangent space $T_{\omega}\mathcal{P}$ is a subspace of $T_{\eta}\mathcal{E}$.
Let $A(\omega)$ be an $(m-d)$-dimensional smooth submanifold of $\mathcal{E}$
attached to each point $\omega\in\mathcal{P}$ and
assume that $A(\omega)$ orthogonally transverses $\mathcal{P}$ at $\eta(\omega)$.
Such a family of submanifolds $\{A(\omega) \mid \omega \in \Omega \}$ is called an ancillary family.
We introduce an adequate coordinate system
$\xi=(\xi^\kappa)~(\kappa=d+1,\dots,m)$ to $A(\omega)$
so that a pair $(\omega,\xi)$ uniquely specifies a point of $\mathcal{E}$ in the neighborhood of $\eta(\omega)$.
We adopt a coordinate system $\xi$ on $A(\omega)$ so that $\eta(\omega,\xi) \in \mathcal{P}$ if $\xi = 0$.
Then, we have
\[
\mathrm{span} \left\{\partial^i p(x;\eta)\right\}
 = \mathrm{span} \left\{\partial_a p(x;\eta), \partial_\kappa p(x;\eta)\right\}
\]
where $\partial_\kappa p(x;\eta) = ({\partial}/{\partial \xi^\kappa}) p(x;\eta)$.
Since $A(\omega)$ orthogonally transverses $\mathcal{E}$, we have
\[
\langle \partial_a p(x;\eta), \partial_\kappa p(x;\eta)\rangle = 0~~(a=1,\dots,d,~\kappa=d+1,\dots,m).
\]

Now we show that the extended plugin density with $\hat{\eta}_\pi$ is the projection of the Bayesian predictive density onto $\mathcal{E}$ asymptotically, as shown in Figure \ref{projection}.
The projection of \eqref{orthogonal_komaki}
onto the tangent space of $\mathcal{E}$ is
\begin{align}
\label{eq:projection_orthogonal}
&\left\langle
\frac{{g}^{ab}(\hat{\omega}_{\rm MLE})}{2n}
(
\partial_{a}\partial_{b}p(y;\hat{\omega}_{\rm MLE})-\mGamma{ab}{~c}(\hat{\omega}_{\rm MLE})\partial_c p(y;\hat{\omega}_{\rm MLE})), \right. \nonumber \\
&\left. \partial_\lambda p(y;\hat{\omega}_{\rm MLE})
\right\rangle g^{\lambda\kappa}\partial_\kappa p(y;\hat{\omega}_{\rm MLE}) 
~~{(
\lambda=d+1,\dots,m
).}
\end{align}
{Because $\langle \partial_c p(x;\eta), \partial_\lambda p(x;\eta) \rangle=0~(c=1,\dots,d)$, (\ref{eq:projection_orthogonal}) is}
\begin{align*}
&\frac{{g}^{ab}(\hat{\omega}_{\rm MLE})}{2n}\left\langle
\partial_{a}\partial_{b}p(y;\hat{\omega}_{\rm MLE}), \partial_\lambda p(y;\hat{\omega}_{\rm MLE})
\right\rangle\\
& \times g^{\kappa \lambda}\partial_\kappa p(y;\hat{\omega}_{\rm MLE})\\
&= \frac{{g}^{ab}(\hat{\omega}_{\rm MLE})}{2n}\mH{ab}{\kappa}
\partial_\kappa p(y;\hat{\omega}_{\rm MLE})
\end{align*}
where
\begin{align*}
\mH{ab\kappa}{}
&{
=\mGamma{ab\kappa}{}
}\\
&=\langle \partial_a\partial_b p(x;\omega),\partial_\kappa p(x;\eta) \rangle\\
&= (\partial_a\partial_b \eta_i)(\partial_\kappa \theta^i)
\end{align*}
{$(a,b=1,\dots,d,~\kappa=d+1,\dots,m)$} is the mixture embedding curvature of $\mathcal{P}$ in $\mathcal{E}$ and
$\mH{ab}{~\kappa}=\mH{ab\lambda}{}g^{\kappa\lambda}$.
{
Here we represent $\mGamma{ab\kappa}{}$ as $\mH{ab\kappa}{}$ to clarify its geometrical interpretation as the mixture embedding curvature, distinguishing from $\mGamma{abc}{}, \mGamma{a\kappa b}{}$ and $\mGamma{\kappa ab}{}.$
}
We confirm that it coincides with the orthogonal shift to $p(y;\hat{\eta}_\pi)$ asymptotically as follows.
Let
\[
h_{ab} = (\partial_{a}\partial_{b}\eta_i({\omega}))\partial^i p(x;\eta)-\mGamma{ab}{~c}\partial_c p(x;\omega),
\]
then the orthogonal component in Theorem \ref{plug-in expansion} is
\begin{align}
\label{eq:hab}
\frac{g^{ab}(\hat{\omega}_{\rm MLE})}{2n}h_{ab}(\hat{\omega}_{\rm MLE}).
\end{align}
Since $h_{ab}~(a,b=1,\dots,d)$ are included in the space spanned by $\partial_\kappa p(x;\omega) ~(\kappa=d+1,\dots,m)$, the following holds:
\begin{align*}
h_{ab}
&=\langle h_{ab},\partial_\lambda p(x;\eta) \rangle g^{\kappa \lambda}\partial_\kappa p(x;\eta)\\
&=\left\langle
\partial_{a}\partial_{b}\eta_i({\omega})\partial^i p(x;\eta)-\mGamma{ab}{~c}\partial_c p(x;\omega),  \partial_\lambda p(x;\eta)\right\rangle\\
& \ \ \ \ \times g^{\kappa \lambda}\partial_\kappa p(x;\eta).
\end{align*}
As $\langle \partial_c p(x;\eta), \partial_\lambda p(x;\eta) \rangle=0$\\
$~{(
c=1,\dots,d,~
\lambda=d+1,\dots,m
)}
$,
\begin{align}
\label{eq:hab1}
h_{ab}
&= \langle \partial_{a}\partial_{b}\eta_i({\omega})\partial^i p(x;\eta), \partial_\lambda p(x;\omega) \rangle g^{\kappa \lambda}\partial_\kappa p(x;\eta) \nonumber \\
&= \partial_{a}\partial_{b}\eta_i({\omega})\left\langle \partial^i p(x;\omega), \frac{\partial \theta^j}{\partial \xi^\lambda}\frac{\partial p(x;\eta)}{\partial \theta^j} \right\rangle g^{\kappa \lambda}\partial_\kappa p(x;\eta) \nonumber \\
&= \partial_{a}\partial_{b}\eta_i({\omega})\partial_\lambda \theta^i
g^{\kappa \lambda}\partial_\kappa p(x;\eta) \nonumber \\
&= \mH{ab}{~\kappa}\partial_\kappa p(x;\eta).
\end{align}

\subsection{Projection angle and risk difference}
\label{sec:optimal_shift}
In this section, we demonstrate that the extended plugin density with $\hat{\eta}_\pi$ is optimal with respect to  the risk along the orthogonal shift from the model $\mathcal{P}$.
This property is parallel to those of Bayesian predictive densities investigated in \cite{komaki1996}, as explained later in this section.
Orthogonal shifts from $\mathcal{P}$ can asymptotically improve the Kullback--Leibler risk from plugin densities of $\mathcal{P}$.
By evaluating those risk improvements, the risk difference between $p(y;\hat{\eta}_\pi)$ and Bayesian predictive densities can be corresponded to the projection angle (as represented in Figure \ref{triangle}).
\begin{figure}[H]
\centering
\includegraphics[width=0.9\columnwidth]{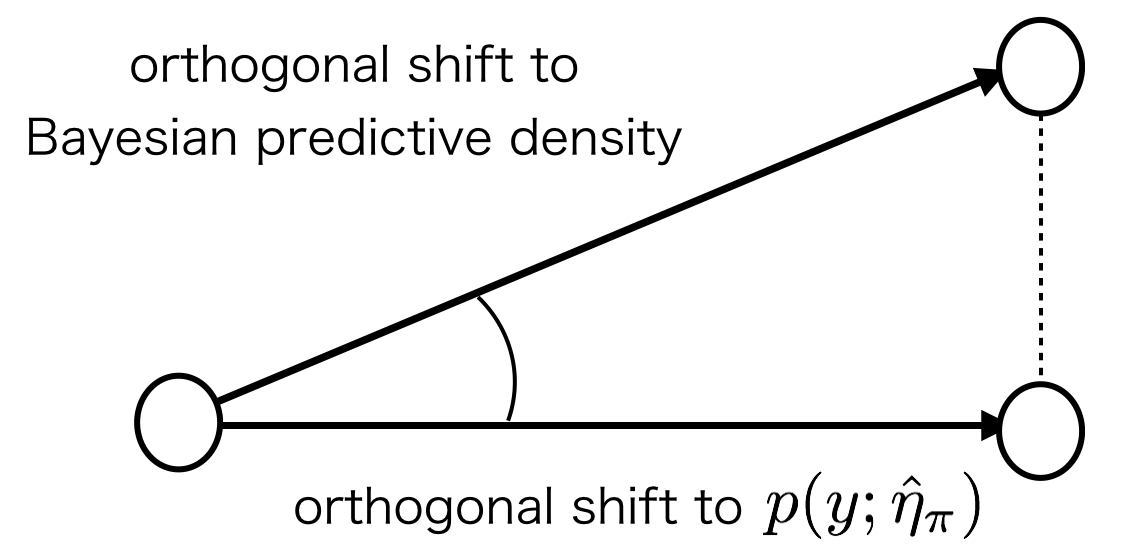}
\caption{
Angle of the projection
}
\label{triangle}
\end{figure}

In the following discussion,
we consider extended plugin densities $p(y;\hat{\eta})$ with estimators $\hat{\eta}=\eta(\hat{\omega},\hat{\xi})$
where $\hat{\omega}$ and $\hat{\xi}$ can be expressed in the following forms, respectively:
\begin{align}
\label{u_and_v}
\hat{\omega} &= \hat{\omega}_{\rm MLE} + \frac{1}{n} \alpha(\hat{\omega}_{\rm MLE}) + \mathrm{o}_p(n^{-1}), \nonumber\\[-3mm]
& \\[-3mm]
\hat{\xi} &=  \frac{1}{n}\beta(\hat{\omega}_{\rm MLE}) + \mathrm{o}_p(n^{-1}). \nonumber
\end{align}
Here, $\alpha^a(\omega), \beta^\kappa (\omega)$ are smooth functions of $\mathrm{O}_p(1)$.
The density can be expanded as
\begin{align}
\label{expansion}
p(y;\hat{\eta}) \nonumber
&=p_{\alpha,\beta}(y;\hat{\omega}_{\rm MLE})\nonumber \\
&=p(y;\hat{\omega}_{\rm MLE}) + \frac{1}{n}\alpha^a(\hat{\omega}_{\rm MLE})\partial_a p(y;\eta(\hat{\omega}_{\rm MLE}))\nonumber\\
& \ \ \ + \frac{1}{n} \beta^\kappa(\hat{\omega}_{\rm MLE})\partial_\kappa p(y;\eta(\hat{\omega}_{\rm MLE})) + \mathrm{o}_p(n^{-1}).
\end{align}
The following results hold also for other asymptotically efficient estimators of $\omega$
other than $\hat{\omega}_{\rm MLE}$, although here we consider only $\hat{\omega}_{\rm MLE}$ for simplicity.
This class of extended plugin densities include $p(x;\hat{\omega}_{\rm MLE})$ and $p(x;\hat{\eta}_\pi)$.
For $\hat{\omega}=\hat{\omega}_{\rm MLE}$ and $\hat{\xi}=0$, the density is the plugin density with the maximum likelihood estimator $\hat{\omega}_{\rm MLE}$.
The extended plugin density $p(y;\hat{\eta}_\pi)$ in Theorem \ref{plug-in expansion} is given by, using {(\ref{eq:hab}) and (\ref{eq:hab1}),}
\begin{align*}
\alpha^a(\hat{\omega}_{\rm MLE}) &= {g^{ab}}(\hat{\omega}_{\rm MLE})\left(\partial_b \log\frac{\pi}{\pi_J} + \frac{T_b}{2} \right),\\
\beta^\kappa &= \frac{1}{2}\mH{ab}{~\kappa}(\hat{\omega}_{\rm MLE})g^{ab}(\hat{\omega}_{\rm MLE}).
\end{align*}

We derive the Kullback--Leibler risk of extended plugin densities in this class.
\begin{prop}
\label{prop_KLrisk}
The Kullback--Leibler risk of an extended plugin density $p(y;\hat{\eta})$ where $\hat{\eta}=(\hat{\omega},\hat{\xi})$ is expressed as (\ref{u_and_v}) is asymptotically expanded as follows:
\begin{align}
\label{eq_KLloss}
&{E}[D\{p(y;\omega), p(y;\hat{\eta})\}]\nonumber \\
&= \frac{d}{2n} + \frac{1}{2n^2}g_{ab}(\omega)\alpha^a(\omega) \alpha^b(\omega) + \frac{1}{n^2}\enabla{a}{}\alpha^a(\omega) \nonumber \\
& \ \ \ +\frac{1}{2n^2}g_{\kappa\lambda}(\omega)\beta^\kappa(\omega) \beta^\lambda(\omega)
 -\frac{1}{2n^2}\mH{ab\kappa}{}(\omega)g^{ab}(\omega)\beta^\kappa(\omega)\notag\\
&~~~+({\rm terms~independent~of~} \alpha, \beta) + \mathrm{o}(n^{-2}),
\end{align}
where 
$\enabla{a}{}\alpha^b = \partial_a \alpha^b + \eGamma{ac}{~b}\alpha^c$.
\end{prop}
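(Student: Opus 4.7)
The strategy is a standard fourth-order asymptotic expansion of the Kullback--Leibler risk in the mixed coordinate system $(\omega^a,\xi^\kappa)$ on $\mathcal{E}$, centered at the true point $(\omega,0)\in\mathcal{P}$. Set $u^a := \hat{\omega}_{\rm MLE}^a-\omega^a = \mathrm{O}_p(n^{-1/2})$ so that $v^a := \hat{\omega}^a-\omega^a = u^a + \alpha^a(\omega)/n + \mathrm{O}_p(n^{-3/2})$ and $v^\kappa := \hat{\xi}^\kappa = \beta^\kappa(\omega)/n + \mathrm{O}_p(n^{-3/2})$. Writing $\ell(y;\omega,\xi)=\log p(y;\omega,\xi)$, the loss becomes $D = -E_Y[\ell(y;\hat{\omega},\hat{\xi})-\ell(y;\omega,0)]$, and one Taylor-expands $\ell(y;\hat{\omega},\hat{\xi})$ to fourth order in $v$ around $(\omega,0)$; fourth order is needed because $v=\mathrm{O}_p(n^{-1/2})$ while the target precision is $\mathrm{o}(n^{-2})$.

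Next, one takes $E_Y$ term by term using the classical identities $E_Y[\ell_i]=0$, $E_Y[\ell_{ij}]=-g_{ij}$, and $E_Y[\ell_{ijk}] = -T_{ijk}-\eGamma{ijk}{}-\eGamma{ikj}{}-\eGamma{jki}{}$, obtained by differentiating the score identity. The construction of the ancillary family $A(\omega)$ makes $g_{a\kappa}(\omega,0)=0$, which kills the leading second-order cross terms and is essential for getting a clean expansion. Then one takes $E_{X^n}$ using the standard MLE cumulants in the curved exponential family: $E[u^au^b] = g^{ab}/n + \mathrm{O}(n^{-2})$, the MLE bias $E[u^a] = b^a(\omega)/n + \mathrm{O}(n^{-2})$ expressible in terms of $\eGamma{}{}$ and $T_{}$, $E[u^au^bu^c] = \mathrm{O}(n^{-2})$, and $E[u^au^bu^cu^d] = (g^{ab}g^{cd}+g^{ac}g^{bd}+g^{ad}g^{bc})/n^2 + \mathrm{O}(n^{-3})$.

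Collecting by origin, the leading $d/(2n)$ is $\tfrac12 g_{ab}E[u^au^b]$; the $g_{ab}\alpha^a\alpha^b/(2n^2)$ term is the explicit $\alpha$-square contribution of $\tfrac12 g_{ab}v^av^b$; the $g_{\kappa\lambda}\beta^\kappa\beta^\lambda/(2n^2)$ term likewise comes from $\tfrac12 g_{\kappa\lambda}v^\kappa v^\lambda$; the $\enabla{a}{}\alpha^a/n^2 = (\partial_a\alpha^a+\eGamma{ac}{~a}\alpha^c)/n^2$ piece emerges from combining the bilinear $g_{ab}\alpha^a E[u^b]/n$ with the cubic Taylor contribution $\alpha^a E[u^bu^c]E_Y[\ell_{abc}]/2$ and the chain-rule expansion $\alpha^a(\hat{\omega}_{\rm MLE})=\alpha^a(\omega)+(\partial_b\alpha^a)u^b+\cdots$ through $g_{ab}(\partial_c\alpha^a)g^{bc}=\partial_a\alpha^a$; finally, the interesting cross term $-\mH{ab\kappa}{}g^{ab}\beta^\kappa/(2n^2)$ arises from the cubic Taylor term $-\tfrac12 v^av^bv^\kappa E_Y[\ell_{ab\kappa}]$, where upon substituting $E[u^au^b]=g^{ab}/n$ and invoking the curved-exponential-family identities (\ref{mGamma simple}) together with $T_{ab\kappa}+\eGamma{ab\kappa}{}=\mGamma{ab\kappa}{}=\mH{ab\kappa}{}$, the cubic cumulant collapses to the mixture embedding curvature. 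All remaining $\mathrm{O}(n^{-2})$ contributions (involving only the true $\omega$ and MLE fluctuations) go into the ``(terms independent of $\alpha,\beta$)'' bucket.

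The main obstacle is the bookkeeping: four separate sources (second-order Taylor with $v\ni\alpha/n$, third-order Taylor with $E[u^2]$ of $\mathrm{O}(n^{-1})$, fourth-order Taylor with $E[u^4]$ of $\mathrm{O}(n^{-2})$, and the $\mathrm{O}(n^{-1})$ MLE bias) each contribute at order $n^{-2}$, and only after exploiting $g_{a\kappa}=0$, the symmetries of $\eGamma{}{}$, the cubic score identity, and the curved-exponential-family relation $T_{}=\mGamma{}{}-\eGamma{}{}$ do the five clean terms displayed in (\ref{eq_KLloss}) survive. The computation parallels the one carried out for $\mathcal{P}\subset\mathcal{F}$ in \cite{komaki1996}, but must be redone in the mixed $(\omega^a,\xi^\kappa)$ coordinates on $\mathcal{E}$, with the ancillary family playing the role that orthogonal directions in $\mathcal{F}$ play in the infinite-dimensional setting.
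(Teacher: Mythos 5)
Your proposal is correct and follows essentially the same route as the paper's Appendix~B: a fourth-order Taylor expansion of the log-density in the mixed $(\omega^a,\xi^\kappa)$ coordinates, the score identities and $g_{a\kappa}=0$ under $E_Y$, the Efron-type MLE moment/bias expansions under $E_{X^n}$, and the identities $\mGamma{ab\kappa}{}=\mH{ab\kappa}{}$ and $-\mGamma{b\kappa a}{}+T_{ab\kappa}=\mH{ab\kappa}{}$ (the latter from differentiating $g_{a\kappa}=0$, which your sketch gestures at but should state explicitly) to collapse the cross term to the mixture embedding curvature.
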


\begin{proof}
See Appendix B.
\end{proof}

The risk expansion (\ref{eq_KLloss}) confirms that the risk can be improved from a predictive density in $\mathcal{P}$ by selecting an appropriate orthogonal shift $\beta$.
We obtain the optimal orthogonal shift.
\begin{thm}
\label{optimal orthogonal shift}
The optimal $\beta^\kappa$ in (\ref{expansion}) is given by
\begin{align}
\label{orthogonal_opt}
\beta^\kappa_{\rm opt}(\omega) = \frac{1}{2}\mH{ab}{~\kappa}({\omega})g^{ab}({\omega}).
\end{align}
\end{thm}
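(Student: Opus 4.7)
The plan is to read off the optimality condition directly from the risk expansion established in Proposition \ref{prop_KLrisk}, treating it as a quadratic optimization in $\beta$. Since the expansion separates the dependence on $\alpha$ and $\beta$, the minimization with respect to $\beta^\kappa$ can be carried out independently of the choice of $\alpha$.

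First, I would extract the $\beta$-dependent part of the asymptotic risk (\ref{eq_KLloss}), namely
\begin{align*}
Q(\beta;\omega) &= \frac{1}{2n^2}g_{\kappa\lambda}(\omega)\beta^\kappa(\omega)\beta^\lambda(\omega) \\
&\quad - \frac{1}{2n^2}\mH{ab\kappa}{}(\omega)g^{ab}(\omega)\beta^\kappa(\omega),
\end{align*}
and note that all other terms in (\ref{eq_KLloss}) either depend only on $\alpha$ or are independent of both $\alpha$ and $\beta$. Since $(g_{\kappa\lambda})$ is the restriction of the Fisher metric to the ancillary directions $\partial_\kappa p(y;\eta)$ at $\eta(\omega)$ and is therefore positive definite, $Q$ is a strictly convex quadratic form in $\beta$, and its unique pointwise minimizer is the stationary point.

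Next, I would differentiate with respect to $\beta^\kappa$ and impose stationarity:
\begin{align*}
\frac{\partial Q}{\partial \beta^\kappa}
= \frac{1}{n^2}g_{\kappa\lambda}(\omega)\beta^\lambda(\omega)
- \frac{1}{2n^2}\mH{ab\kappa}{}(\omega)g^{ab}(\omega) = 0.
\end{align*}
Solving for $\beta$ by contracting with the inverse metric $g^{\mu\kappa}$ and using the definition $\mH{ab}{~\mu} = \mH{ab\kappa}{}g^{\mu\kappa}$ introduced immediately after (\ref{eq:projection_orthogonal}), I obtain
\begin{align*}
\beta^\mu_{\rm opt}(\omega) = \frac{1}{2}\mH{ab}{~\mu}(\omega)g^{ab}(\omega),
\end{align*}
which is exactly the stated formula. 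Finally, strict convexity of $Q$ in $\beta$ guarantees that this stationary point is indeed the global minimizer, completing the argument.

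The substantive work has already been done in Proposition \ref{prop_KLrisk}; here there is no real obstacle beyond a one-line quadratic minimization. The only things worth double-checking are that the cross-term coefficient in (\ref{eq_KLloss}) has the correct sign and index placement, and that one is allowed to minimize pointwise in $\omega$ (which is legitimate since we are comparing risk functions achievable by smooth functions $\beta^\kappa(\omega)$, and the minimizer displayed above is a smooth function of $\omega$ via $\mH{ab}{~\kappa}$ and $g^{ab}$).
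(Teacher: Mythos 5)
Your proposal is correct and matches the paper's argument: both isolate the $\beta$-dependent quadratic part of the risk expansion from Proposition \ref{prop_KLrisk} and minimize it pointwise, the only cosmetic difference being that the paper completes the square while you set the gradient to zero. The positive-definiteness of $g_{\kappa\lambda}$ that you invoke is exactly what makes the paper's completed square nonnegative, so the two arguments are equivalent.
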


\begin{proof}
The risk in Proposition \ref{prop_KLrisk} is
\begin{align*}
&{E}[D\{p(y;\omega), p(y;\hat{\eta})\}]\\
&= \frac{1}{2n^2}g_{ab}\alpha^a\alpha^b + \frac{1}{n^2}\enabla{a}{}\alpha^a(\omega)\\
& \ \ \ + \frac{1}{2n^2}g_{\kappa\lambda}\left(\beta^\lambda -\frac{1}{2}\mH{ab}{~\lambda}g^{ab}\right)\left(\beta^\kappa -\frac{1}{2}\mH{cd}{~\kappa}g^{cd}\right)\\
& \ \ \ 
-\frac{1}{8n^2}\mH{ab}{~\lambda}\mH{cd}{~\kappa}g^{ab}g^{cd}g_{\kappa\lambda}
+ ({\rm terms~independent~of~}\alpha, \beta)\\
& \ \ \ + \mathrm{o}(n^{-2}).
\end{align*}
Therefore, $\beta$ is optimal when
\[
\beta^\kappa(\omega) = \frac{1}{2}\mH{ab}{~\kappa}(\omega)g^{ab}(\omega).
\]
\end{proof}

Accordingly, the orthogonal component of the shift in Theorem \ref{plug-in expansion} is optimal, and the extended plugin density with $\hat{\eta}_\pi$ has the optimal shift (as illustrated in Figure \ref{fig:opt_orth}).
The risk improvement by the optimal shift is evaluated by the inner product of the optimal shifts $\beta^\kappa_{\rm opt}$ and given by
\begin{align*}
\frac{1}{8n^2}\mH{ab}{~\lambda}\mH{cd}{~\kappa}g^{ab}g^{cd}g_{\kappa\lambda}
 + \mathrm{o}(n^{-2}),
\end{align*}
which does not depend on the parallel shift $\alpha$.
Here, $\mH{ab}{~\lambda}\mH{cd}{~\kappa}g^{ab}g^{cd}g_{\kappa\lambda}$ is the mixture mean curvature of $\mathcal{P}$ embedded in $\mathcal{E}$ at $\omega$, thus this risk improvement has a geometrical interpretation.
\begin{figure}[H]
\centering
\includegraphics[width=0.9\columnwidth]{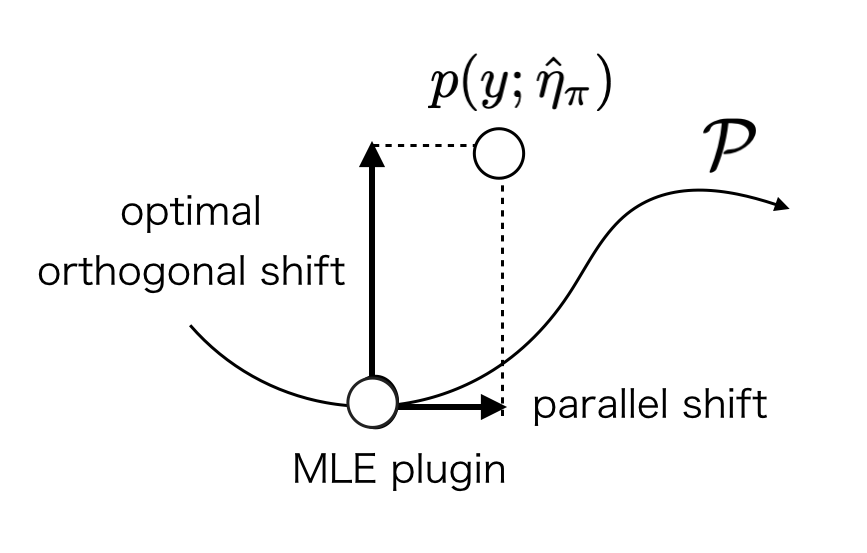}
\caption{
Extended plugin density with $\hat{\eta}_\pi$ has the optimal orthogonal shift
}
\label{fig:opt_orth}
\end{figure}
These results are related to the properties of Bayesian predictive densities.
In \cite{komaki1996}, it is shown that Bayesian predictive densities are optimal along orthogonal shift from $\mathcal{P}$.
The orthogonal shift is not included in the tangent space of $\mathcal{E}$, as explained in Section \ref{subsec:projection}.
The risk improvement achieved by the orthogonal shift is the mixture mean curvature of $\mathcal{P}$ embedded in $\mathcal{F}$, while the risk improvement of $p(y;\hat{\eta}_\pi)$ is the mixture mean curvature of $\mathcal{P}$ embedded in $\mathcal{E}$.
The risk improvement is evaluated by inner products of the optimal shifts, and as a result the cosine of the angle between the two orthogonal shifts, as shown in Figure \ref{triangle}, can be found as the square root of the ratio of the two risk improvements.
\vspace{2mm}

\noindent {Example ~(Fisher circle model, continued)}
We have
$g_{\omega\omega} = 1$ and $\mGamma{\omega\omega}{~\omega}=0$.
Thus, the optimal orthogonal shift is
\begin{align*}
h_{\omega\omega}(x;\eta) &= \left( \partial_{\omega\omega}\eta_i - \mGamma{\omega\omega}{~\omega}\partial_\omega \eta_i \right)\partial_i p(x;\eta)\\
&{
= p(x;\eta)(-(x_1-\eta_1)\cos\omega-(x_2-\eta_2)\sin\omega)
}
\end{align*}
and the risk improvement obtained by the optimal orthogonal shift is
\begin{align*}
&\frac{1}{8n^2}\mH{ab}{~\lambda}\mH{cd}{~\kappa}g^{ab}g^{cd}g_{\kappa\lambda} + \mathrm{o}(n^{-2})\\
&{
= \frac{1}{8n^2}E[(-(y_1-\eta_1)\cos\omega-(y_2-\eta_2)\sin\omega)^2] + \mathrm{o}(n^{-2})
}\\
&= \frac{1}{8n^2} + \mathrm{o}(n^{-2}).
\end{align*}
The risk improvement corresponding to the optimal shift (\ref{orthogonal_komaki}) is ${3}/({8n^2}) + o(n^{-2})$.

If the variance of $x_1, x_2$ is $\sigma^2$,
the risk improvements corresponding to the optimal orthogonal shift and to the shift (\ref{orthogonal_komaki}) can be obtained as follows, respectively:
\[
\frac{\sigma^2}{8n^2},~~\frac{\sigma^2+2}{8n^2}.
\]
Therefore, when $\sigma^2$ is large,
the risk improvement obtained by $p(y;\hat{\eta}_\pi)$ becomes relatively significant as well,
and the performance of the Bayes extended estimator is close to that of the Bayesian predictive density.
The cosine of the angle between the two shift vectors is 
\[
\left.\sqrt{\frac{\sigma^2}{8n^2}}\middle/\sqrt{\frac{\sigma^2+2}{8n^2}}=\sqrt{\frac{\sigma^2}{\sigma^2+2}}\right. ,
\]
which approaches $0$ as $\sigma^2$ increases.
In this way, the Bayesian predictive density and $p(y;\hat{\eta}_\pi)$ approach each other.

\section{Numerical studies}
\label{sec:numerical}
The numerical simulations of the Kullback--Leibler risk of $p(y;\hat{\eta}_\pi)$ and Bayesian predictive densities are shown in a curved Gaussian model.
It confirmed the theoretical results so far and also illustrates the practical importance of the projection of Bayesian predictive densities.

The model $\mathcal{P}$ is the spiked covariance model (for more information about related models, see e.g. \cite{johnstone2001}), that is $l$-dimensional Gaussian $\mathrm{N}_l(0,\Sigma)$ with the covariance matrix $\Sigma$ expressed as
\begin{align}
 \Sigma(\lambda, \bvec{u}) = \lambda \bvec{u} \bvec{u}^\top +I_l,
\end{align}
where the vector $\bvec{u}\in\mathbb{R}^l$ satisfies $\bvec{u}^\top \bvec{u}=1$ and $\lambda>0$.
The eigenvalues of the matrix $\Sigma$ are $\lambda+1, 1, \dots, 1$,
and $\bvec{u}$ is the first eigenvector.

The model $\mathcal{P}$ parametrized by $\omega=(\bvec{u},\lambda)$ is embedded in the larger full exponential family $\mathcal{E}=\{\mathrm{N}_l (0,\Sigma)\mid\Sigma\}$, and the expectation parameter $\eta$ comprises the components of $\Sigma$.
The extended plugin distribution $p(y;\hat{\eta}_\pi)$ is $\mathrm{N}_l(0,\hat{\Sigma}_\pi)$, where $\hat{\Sigma}_\pi$ is the posterior mean of $\Sigma$.
The Bayes estimator $(\hat{\lambda}_\pi,\hat{\bvec{u}}_\pi)$ of $\mathcal{P}$ is composed of $\hat{\lambda}_\pi$, the posterior mean of $\lambda$, and $\hat{\bvec{u}}_\pi$, the first eigen vector of 
$\hat{\Sigma}_\pi$.
The plugin distribution of the Bayes estimator is $\mathrm{N}_l(0,\hat{\lambda}_\pi\hat{\bvec{u}}_\pi\hat{\bvec{u}}_\pi^\top +I_l)$.
The settings of $l$ and $n$ are $(l,n)=(5,20)$ and $(80,320)$.
The posterior mean of $\lambda$ and $\Sigma$ is computed by the 1000 MCMC samples for $l=5$ and 2000 MCMC samples for $l=80$ produced by Gibbs sampling with $250$ and $500$ burn-in samples, respectively.
The Bayesian predictive density is computed { by taking the mean} of the plugin densities of those MCMC samples of $(\lambda,\bvec{u})$.
The Kullback--Leibler risk is derived as the mean of 2000 trials.

\begin{figure}[h]
\centering
\begin{subfigure}{0.9\columnwidth}
\includegraphics[width=\columnwidth]{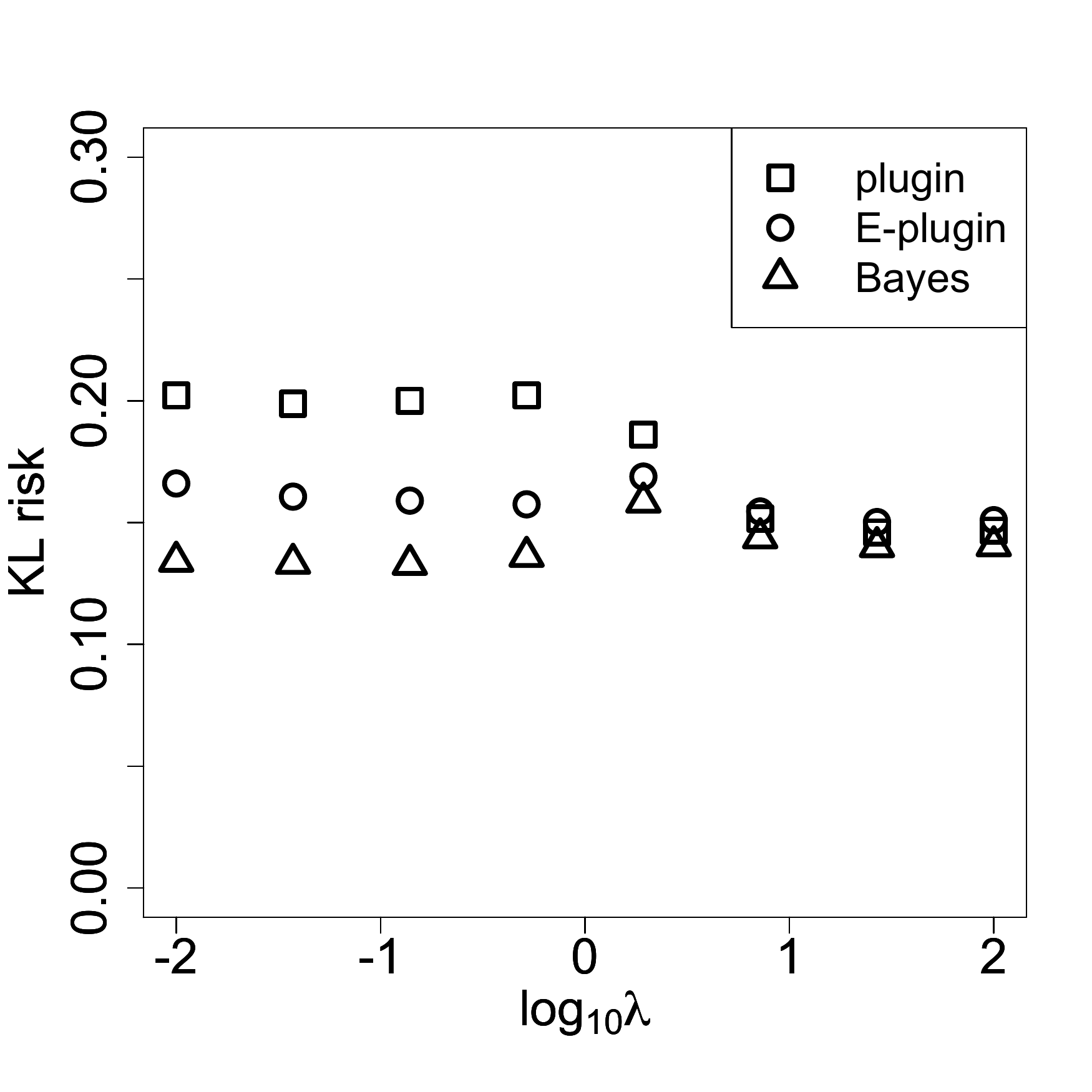}
\caption{l=5, n=20}
\label{p5}
\end{subfigure}
\\
\begin{subfigure}{0.9\columnwidth}
\centering
\includegraphics[width=\columnwidth]{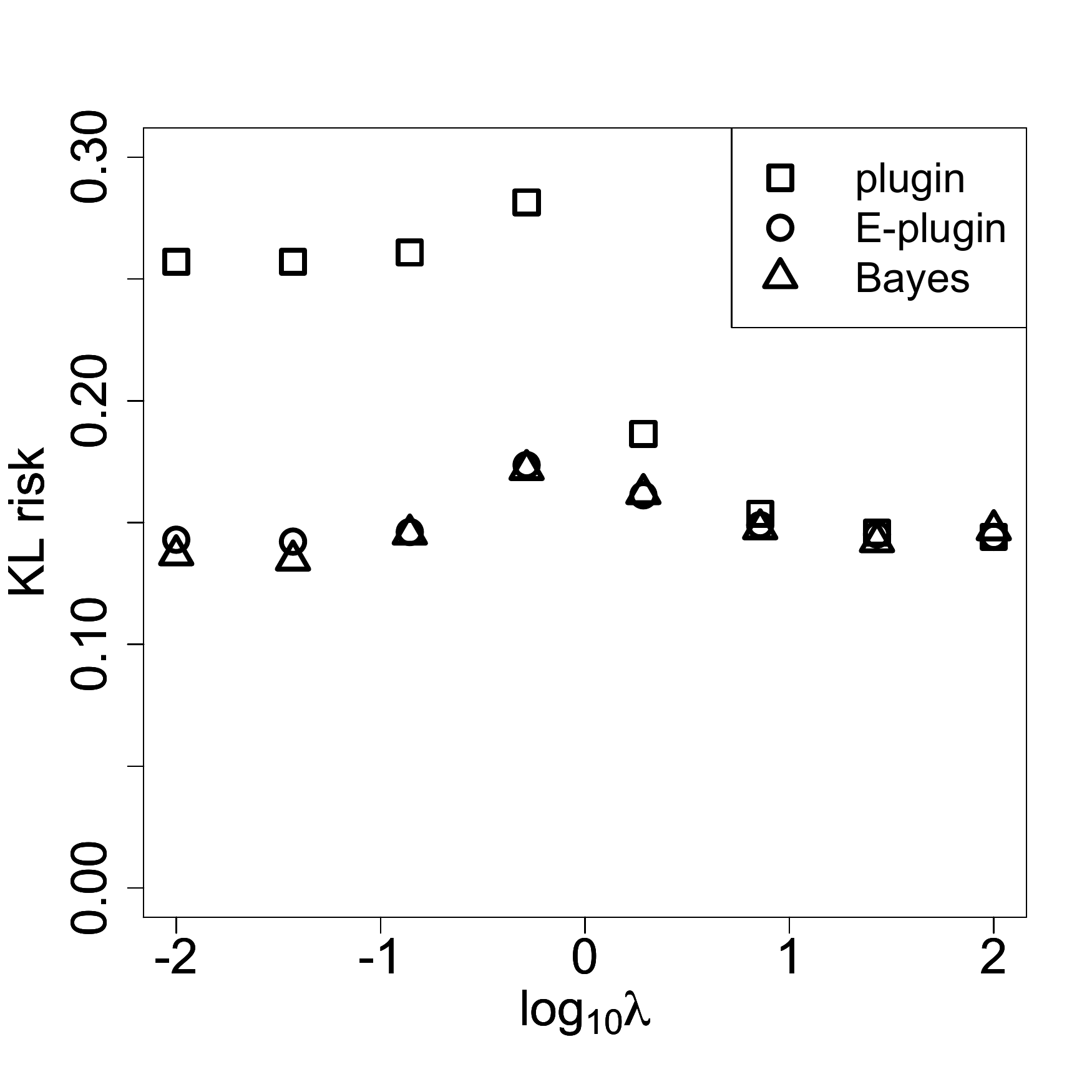}
\caption{l=80, n=320}
\label{p80}
\end{subfigure}
\caption{
Kullback--Leibler (KL) risk of plugin density with Bayes estimator, extended plugin density (E-plugin) with Bayes extended estimator, and Bayesian predictive density
}
\label{fig:KL}
\end{figure}
The results are illustrated in Figure \ref{fig:KL},
and it is confirmed that $p(y;\hat{\eta}_\pi)$ approaches the Bayesian predictive density as the size of $\mathcal{E}$ increases.
In Figure \ref{p5}, the three-layer structure of the plugin density, the extended plugin density, and the Bayesian predictive density is seen in the risk comparison.
On the other hand, in Figure \ref{p80}, it could be seen that the risk plots of $p(y;\hat{\eta}_\pi)$ and the Bayesian predictive density are quite close, which means the projection angle between them is close to zero.
The dimension of the parameter space of $\mathcal{P}$ is $l$ and that of $\mathcal{E}$ is $l(l+1)/2$; thus, $\mathcal{P}$ is embedded in relatively larger full exponential families when $l$ increases.
Therefore, it is natural that the two risk performances approach as $l$ increases because the extended model $\mathcal{E}$ {approaches the set} of all probability densities $\mathcal{F}$, and $p(y;\hat{\eta}_\pi)$ {approaches the Bayesian predictive density}.

\begin{table}[h]
\begin{center}
\caption{{Computational time required to evaluate predictive density for $1000$ new samples and memory size of each predictive density ($l=80, \lambda=1$).}}
  \begin{tabular}{|l|cc|} \hline
 & {Bayes} & {E-plugin} \\ \hline
 {computational time}& {$1.24\times 10^2$ s} & {$1.6\times 10^{-2}$ s}\\ \hline
 {predictive density size}&{$1.29 \times 10^6$ bytes} & {$5.14 \times 10^4$ bytes}\\ \hline
  \end{tabular}
  \label{tab:times}
\end{center}
\end{table}

Figure \ref{p80} also illustrates the practical advantage of $p(y;\hat{\eta}_\pi)$, as it shows that projection of Bayesian predictive densities onto finite-dimensional models of reasonable size is an effective way of approximation.
Bayesian predictive densities are typically approximated by the mean of plugin densities, because obtaining the full density function is intractable.
In these experiments, the Bayesian predictive density is the mean of 2000 plugin densities.
The problem of this approximation is that it requires large space and time to compute density, because storing all MCMC samples is necessary, and we have to {take the mean} of these plugin densities for each $y$.
Figure \ref{p80} demonstrates that the approximation by $p(y;\hat{\eta}_\pi)$, a single point in $\mathcal{E}$ is comparable to the mean of 2000 points in $\mathcal{P}$.
Approximation by $p(y;\hat{\eta}_\pi)$ does not require storing MCMC samples, and the full density function is available avoiding taking mean of plugin densities every time.
{Table \ref{tab:times} provides the computation time required to evaluate the density of 1000 new samples $y$ and the memory size of the Bayesian predictive density and $p(y;\hat{\eta}_\pi)$.
It is shown that both computational time and memory size are effectively saved when we utilize $p(y;\hat{\eta}_\pi)$.}
Therefore, the extended plugin density $p(y;\hat{\eta}_\pi)$ is an effective  approximation of Bayesian predictive densities, with a large advantage in terms of  computational costs, which has a natural theoretical interpretation as a projection onto a finite dimensional model.


%

\appendices
\section{Proof of Theorem \ref{thm_estimator}}
Let $L(\eta)=\frac{1}{n}\sum_{t=1}^{n}\log p(x(t);\eta)$.
Then, 
$\hat{\eta}_\pi(x^n) = \{(\hat{\eta}_\pi)_i(x^n)\}$
is given by
\begin{align*}
\hat{\eta}_\pi(x^n)&=\frac{\int \eta(\omega) p(x^n;\eta(\omega))\pi(\omega)d\omega}{\int p(x^n;\eta(\omega))\pi(\omega)d\omega}\\
&= \frac{\int \eta(\omega) \exp(nL(\eta(\omega)))\pi(\omega)d\omega}{\int \exp(nL(\eta(\omega))) \pi(\omega)d\omega}.
\end{align*}

We approximate $\hat{\eta}_\pi$ using the Laplace method (e.g., \cite{kass1997}, Sec. 3.6 and Sec. 4.6, and \cite{tierney1986}).
The proposed approximation follows Theorem  4.6.1 in  \cite{kass1997}.
First we expand $\pi(\omega)\exp(nL(\omega))$ around $\omega=\hat{\omega}_{\rm MLE}$.
In the following, symbols such as
$\eta(\hat{\omega}_{\rm MLE}), \partial_a \eta(\hat{\omega}_{\rm MLE})$, and $\partial_a\partial_b \eta(\hat{\omega}_{\rm MLE})$
are abbreviated to
$\hat{\eta}, \partial_a\hat{\eta}$, and $\partial_{ab}\hat{\eta}$
respectively.
Let $\omega = \hat{\omega}_{\rm MLE} +\phi/\sqrt{n}$. Then,
\begin{align*}
&\pi(\omega)\exp(nL(\omega))\\
&=
\left(
\hat{\pi} + \frac{(\partial_a\hat{\pi}) \phi^a}{\sqrt{n}} + \frac{(\partial_{ab}\hat{\pi})\phi^a\phi^b}{2n}
+ \frac{(\partial_{abc}\hat{\pi})\phi^a\phi^b\phi^c}{6n\sqrt{n}}\right. \\
&\left. ~~+\mathrm{O}_p(n^{-2})
\right)
\exp
\left(
n\hat{L} +\frac{(\partial_{ab}\hat{L})\phi^a\phi^b}{2}
+ \frac{(\partial_{abc}\hat{L})\phi^a\phi^b\phi^c}{6\sqrt{n}} \right.\\
&\left. ~~+ \frac{(\partial_{abcd}\hat{L})\phi^a\phi^b\phi^c\phi^d}{24{n}} 
+ \frac{(\partial_{abcde}\hat{L})\phi^a\phi^b\phi^c\phi^d\phi^e}{120n\sqrt{n}}\right. \\
&~~ \left. +\mathrm{O}_p(n^{-2})
\right)\\
&=
\hat{\pi}e^{n\hat{L}}e^{(\partial_{ab}\hat{L})\phi^a\phi^b/2} \left(
1 + \frac{(\partial_a \hat{\pi}) \phi^a}{\hat{\pi}\sqrt{n}} + \frac{(\partial_{ab}\hat{\pi})\phi^a\phi^b}{2 \hat{\pi}n}\right. \\
&~~\left. + \frac{(\partial_{abc}\hat{\pi})\phi^a\phi^b\phi^c}{6 \hat{\pi}n\sqrt{n}}+\mathrm{O}_p(n^{-2})
\right)
\left(
1 +  \frac{(\partial_{abc}\hat{L})\phi^a\phi^b\phi^c}{6\sqrt{n}}\right. \\
&~~ \left. + \frac{(\partial_{abcd}\hat{L})\phi^a\phi^b\phi^c\phi^d}{24{n}} \right.\\
&~~ \left. + \frac{(\partial_{abc}\hat{L})(\partial_{a'b'c'}\hat{L})\phi^a\phi^b\phi^c\phi^{a'}\phi^{b'}\phi^{c'}}{72{n}}
+\mathrm{O}_p(n^{-3/2})
\right)\\
\end{align*}
\begin{align*}
&=
\hat{\pi}e^{n\hat{L}}e^{-\hat{J}_{ab}\phi^a\phi^b/2}\left(
1+\frac{(\partial_a \hat{\pi}) \phi^a}{\hat{\pi}\sqrt{n}} 
{+\frac{(\partial_{abc}\hat{L})\phi^a\phi^b\phi^c}{6\sqrt{n}}}\right. \\
&~~ \left. + \frac{(\partial_{ab}\hat{\pi})\phi^a\phi^b}{2\hat{\pi}n}+ \frac{(\partial_a \hat{\pi}) (\partial_{bcd}\hat{L})\phi^a\phi^b\phi^c\phi^d}{6\hat{\pi}n}\right.\\
&~~\left. + \frac{(\partial_{abc}\hat{\pi})\phi^a\phi^b\phi^c}{6\hat{\pi}n\sqrt{n}}
+ \frac{(\partial_{ab}\hat{\pi})(\partial_{cde}\hat{L})\phi^a\phi^b\phi^c\phi^d\phi^e}{12\hat{\pi}n\sqrt{n}}
\right.\\
&~~\left.
+ \frac{(\partial_a \hat{\pi})(\partial_{bcde}\hat{L}) \phi^a\phi^b\phi^c\phi^d\phi^e}{24\hat{\pi}n\sqrt{n}}\right.\\
&~~\left. + \frac{(\partial_a \hat{\pi})(\partial_{bcd}\hat{L})(\partial_{efg}\hat{L}) \phi^a\phi^b\phi^c\phi^d\phi^e\phi^f\phi^g}{72\hat{\pi}n\sqrt{n}}
\right.\\
&~~\left.
{+ \frac{C_1}{n}} + \mathrm{O}_p(n^{-2})
\right).
\end{align*}
Here, $\hat{J}_{ab}=-\partial_{ab}\hat{L}$ and we denote the terms {of $\mathrm{O}_p(n^{-1})$} which do not depend on $\pi$ as
{$C_1/n$, which is $\mathrm{O}_p(n^{-1})$.}
{
Let $(\hat{J}^{ab})$ be the inverse matrix of $(\hat{J}_{ab})$.
Next, we integrate both sides of the above equation with respect to $\omega$.
By changing the variables from $\omega$ to $\phi$, and by using the formula of moments of multivariate Gaussian distributions, we obtain
}
\begin{align*}
&\int \pi(\omega)\exp(nL(\omega)) d\omega\\
&=
C_2\hat{\pi}\left(
1
+ \frac{(\partial_{ab}\hat{\pi})}{2\hat{\pi}n}
\int \phi^a\phi^b e^{-\hat{J}_{cd}\phi^c\phi^d/2} d\phi
\right.\\
&~~\left.
+ \frac{(\partial_a \hat{\pi}) (\partial_{bcd}\hat{L})}{6\hat{\pi}n}
\int \phi^a\phi^b\phi^c\phi^d e^{-\hat{J}_{ef}\phi^e\phi^f/2} d\phi
+ \frac{C_1}{n} \right.\\
&~~\left.
+ \mathrm{O}_p(n^{-2})
\right)
\\
&= C_2\hat{\pi}
\left(
1+\frac{(\partial_{ab}\hat{\pi})\hat{J}^{ab}}{2\hat{\pi}n}
\right.\\
&~~\left.
+ \frac{(\partial_a \hat{\pi}) (\partial_{bcd}\hat{L})(\hat{J}^{ab}\hat{J}^{cd}+\hat{J}^{ac}\hat{J}^{bd}+\hat{J}^{ad}\hat{J}^{bc})}{6\hat{\pi}n} + {\frac{C_1}{n}} 
\right.\\
&~~\left.
+ \mathrm{O}_p(n^{-2}) \right)\\
&= C_2\hat{\pi}
\left(
1+\frac{(\partial_{ab}\hat{\pi})\hat{J}^{ab}}{2\hat{\pi}n}+ \frac{(\partial_a \hat{\pi}) (\partial_{bcd}\hat{L})\hat{J}^{ab}\hat{J}^{cd}}{2\hat{\pi}n} + {\frac{C_1}{n}}
\right.\\
&~~\left.
+ \mathrm{O}_p(n^{-2})
\right).
\end{align*}
{Here $C_2$
 does not depend on $\pi$.
}
Replace $\pi(\omega)$ by $\eta_i(\omega)\pi(\omega)$ and we have
\begin{align*}
&\int \eta_i(\omega)\pi(\omega)\exp(nL(\omega)) d\omega\\
&= C_2\hat{\eta}_i\hat{\pi}
\left(
1+\frac{\{\partial_{ab}(\hat{\eta}_i\hat{\pi})\}\hat{J}^{ab}}{2\hat{\eta}_i\hat{\pi}n}
+ \frac{\{\partial_a(\hat{\eta}_i\hat{\pi})\} (\partial_{bcd}\hat{L})\hat{J}^{ab}\hat{J}^{cd}}{2\hat{\eta}_i\hat{\pi}n}
\right.\\
&~~\left.
+ {\frac{C_1}{n}}
 + \mathrm{O}_p(n^{-2})
\right).
\end{align*}
Therefore, the posterior mean of $\eta_i$ is expanded as
\begin{align*}
&(\hat{\eta}_\pi)_i\\
&= \frac{\int \eta_i \exp(nL(\omega))\pi(\omega)d\omega}{\int \exp(nL(\omega)) \pi(\omega)d\omega}\\
&=
\left.C_2\hat{\eta}_i\hat{\pi}
\left(
1+\frac{\partial_{ab}(\hat{\eta}_i\hat{\pi})\hat{J}^{ab}}{2\hat{\eta}_i\hat{\pi}n}
\right.\right.\\
&~~\left.\left.
+ \frac{\{\partial_a(\hat{\eta}_i\hat{\pi})\} (\partial_{bcd}\hat{L})\hat{J}^{ab}\hat{J}^{cd}}{2\hat{\eta}_i\hat{\pi}n} + {\frac{C_1}{n}}
 + \mathrm{O}_p(n^{-2})
\right) \right.\\
&~~~\left. \middle/ 
C_2\hat{\pi}
\left(
1+\frac{(\partial_{ab}\hat{\pi})\hat{J}^{ab}}{2\hat{\pi}n} + \frac{(\partial_a \hat{\pi}) (\partial_{bcd}\hat{L})\hat{J}^{ab}\hat{J}^{cd}}{2\hat{\pi}n}
\right.\right.\\
&~~\left.\left.
 + {\frac{C_1}{n}}
 + \mathrm{O}_p(n^{-2})
\right) \right.\\
&= \hat{\eta}_i
\left(
1+\frac{\hat{J}^{ab}}{2n}\left(\frac{\partial_{ab}(\hat{\eta}_i\hat{\pi})}
{\hat{\eta}_i\hat{\pi}}-\frac{\partial_{ab}\hat{\pi}}{\hat{\pi}} \right)
\right.\\
&~~\left.
+\frac{\hat{J}^{ab}\hat{J}^{cd}\partial_{bcd}\hat{L}}{2n}
\left(  \frac{\partial_a(\hat{\eta}_i\hat{\pi})}{\hat{\eta}\hat{\pi}}-\frac{\partial_a \hat{\pi}}{\hat{\pi}} \right) +\mathrm{O}_p(n^{-2})
\right)\\
&= \hat{\eta}_i +
\frac{\hat{J}^{ab}}{2n}\left(\partial_{ab}\hat{\eta}_i +
\frac{2 (\partial_a \hat{\eta}_i) (\partial_b \hat{\pi})}{\hat{\pi}} \right)
+\frac{\hat{J}^{ab}\hat{J}^{cd}\partial_{bcd}\hat{L}}{2n} \partial_a \hat{\eta}_i
\\
&~~
+\mathrm{O}_p(n^{-2}).
\end{align*}

Since $\hat{J}_{ab}=\hat{g}_{ab}+o_p(1),~\partial_{bcd}\hat{L}={E}(\partial_{bcd}\hat{L})+o_p(1)$ by the law of large numbers, and
\[
{E}(\partial_{bcd}L) = -\partial_b g_{cd}-\eGamma{cdb}{}=-\partial_b g_{cd}-\mGamma{cdb}{}+T_{bcd},
\]
and
\[
g^{cd}\partial_b g_{cd}=\partial_b \log(|g|)=2\partial_b \log \pi_J
\]
hold, we obtain
\begin{align*}
&(\hat{\eta}_\pi)_i\\
&= \hat{\eta}_i +
\frac{\hat{g}^{ab}}{2n}\left( \partial_{ab}\hat{\eta}_i + 2 \partial_a \hat{\eta}_i \partial_b \log \hat{\pi} \right)
\\
&
~~+\frac{\hat{g}^{ab}}{2n}\left( -2\partial_b \log \hat{\pi}_J - \hat{g}^{cd}\mGamma{cdb}{}(\hat{\omega}_{\rm MLE})+T_{b}(\hat{\omega}_{\rm MLE})
\right) \partial_a \hat{\eta}_i
\\
&
~~+\mathrm{o}_p(n^{-1}) \\
&=\hat{\eta}_i +
\frac{\hat{g}^{ab}}{2n}
\left( \partial_{ab}\hat{\eta}_i - \mGamma{ab}{~c}(\hat{\omega}_{\rm MLE}) \partial_c \hat{\eta}_i \right)
\\
&
~~+\frac{\hat{g}^{ab}}{n}
\left( \partial_b \log \frac{\hat{\pi}}{\hat{\pi}_J} + \frac{T_{b}(\hat{\omega}_{\rm MLE})}{2}
\right)\partial_a \hat{\eta}_i
+\mathrm{o}_p(n^{-1}).
\end{align*}
%

\section{Proof of Proposition \ref{prop_KLrisk}}
We abbreviate symbols such as
$g_{ab}(\omega)$, $\mH{ab\kappa}{}(\omega)$, and $p(y;\omega)$ to
$g_{ab}$, $\mH{ab\kappa}{}$, and $p$, respectively.

The Kullback--Leibler divergence from $p(y;\omega)$ to $p_{\alpha,\beta}(y;\hat{\omega}_\mathrm{MLE})$ is expanded as
{
\begin{align*}
&D(p(y;\omega), p_{\alpha,\beta}(y;\hat{\omega}_\mathrm{MLE}))\\
&=D(p, p(y;\hat{\omega},\hat{\xi}))\\
&=
-\int p\{\log p(y;\hat{\omega},\hat{\xi}) -\log p\} dy,
\end{align*}
and we can expand $\log p(y;\hat{\omega},\hat{\xi}) -\log p$ as follows:
\begin{align*}
&\log p(y;\hat{\omega},\hat{\xi}) -\log p\\
&= \tilde{\omega}^a \partial_a \log p + \hat{\xi}^\kappa \partial_\kappa \log p
+ \frac{1}{2}\tilde{\omega}^a\tilde{\omega}^b \partial_{ab} \log p 
\\
&~~
+ \tilde{\omega}^a\hat{\xi}^\kappa \partial_{a\kappa} \log p
+ \frac{1}{2}\hat{\xi}^\kappa\hat{\xi}^\lambda \partial_{\kappa \lambda}\log p
 +\frac{1}{6}\tilde{\omega}^a\tilde{\omega}^b\tilde{\omega}^c \partial_{abc} \log p
\\
&~~
+\frac{1}{2}\tilde{\omega}^a\tilde{\omega}^b\hat{\xi}^\kappa \partial_{ab\kappa} \log p
+ \frac{1}{24}\tilde{\omega}^a\tilde{\omega}^b\tilde{\omega}^c\tilde{\omega}^d \partial_{abcd} \log p 
\\
&~~
+ \rm{o}_p(n^{-2})
\end{align*}
where $\tilde{\omega} = \hat{\omega} - \omega$.
Because $E[\partial_a \log p]=0, E[-\partial_{ab} \log p]=g_{ab},$ and $g_{a\kappa}=0$ for $a=1,\dots,d,~\kappa=d+1,\dots,m$, we have
\begin{align*}
&D(p(y;\omega), p_{\alpha,\beta}(y;\hat{\omega}_\mathrm{MLE}))\\
&= \frac{1}{2}g_{ab}\tilde{\omega}^a\tilde{\omega}^b + {\frac{1}{2}}g_{\kappa \lambda}\hat{\xi}^\kappa\hat{\xi}^\lambda
- \frac{1}{6}\tilde{\omega}^a\tilde{\omega}^b\tilde{\omega}^c E[\partial_{abc} \log p]
\\
&~~
- \frac{1}{2}\tilde{\omega}^a\tilde{\omega}^b\hat{\xi}^\kappa E[\partial_{ab\kappa} \log p] - \frac{1}{24}\tilde{\omega}^a\tilde{\omega}^b\tilde{\omega}^c\tilde{\omega}^d E[\partial_{abcd} \log p]\\
&~~~ + \rm{o}_p(n^{-2})
\\
&= \frac{1}{2}g_{ab}\tilde{\omega}^a\tilde{\omega}^b + {\frac{1}{2}}g_{\kappa \lambda}\hat{\xi}^\kappa\hat{\xi}^\lambda
 + \left(\frac{1}{2}\mGamma{abc}{}-\frac{1}{3}T_{abc}\right)\tilde{\omega}^a\tilde{\omega}^b\tilde{\omega}^c
 \\
&~~
 + \left\{\frac{1}{2}(\mGamma{ab\kappa}{}+\mGamma{a\kappa b}{}+\mGamma{\kappa ab}{}) - T_{ab\kappa}\right\}\tilde{\omega}^a_{\rm MLE}\tilde{\omega}^b_{\rm MLE}\hat{\xi}^\kappa\\
&~~+ K_{abcd}\tilde{\omega}^a_{\rm MLE}\tilde{\omega}^b_{\rm MLE}\tilde{\omega}^c_{\rm MLE}\tilde{\omega}^d_{\rm MLE} + {\rm o}_p(n^{-2}),
\end{align*}
where 
$\tilde{\omega}_{\rm MLE} = \hat{\omega}_{\rm MLE}-\omega$
and 
}
\begin{align*}
&K_{abcd}\\
&=\frac{1}{24}\int\left\{
6 \Bigl(\frac{\partial_a p}{p}\frac{\partial_b p}{p}\frac{\partial_c p}{p}\frac{\partial_d p}{p} \Bigr)
-12\Bigl(\frac{\partial_a p}{p}\frac{\partial_b p}{p}\frac{\partial_c \partial_d p}{p} \Bigr)
\right. \\
&~~\left.
+ 3\Bigl(\frac{\partial_a \partial_b p}{p}\frac{\partial_c \partial_d p}{p} \Bigr)
+ 4\Bigl(\frac{\partial_a p}{p}\frac{ \partial_b \partial_c \partial_d p}{p} \Bigr)
\right\}p dy.
\end{align*}
Therefore, the Kullback--Leibler risk from $p(y;\omega)$ to $p_{\alpha,\beta}(y;\hat{\omega}_\mathrm{MLE})$ is expanded as
\begin{align*}
&{E}[D\{p(y;\omega), p_{\alpha,\beta}(y;\hat{\omega}_\mathrm{MLE})\}]\\
&= \frac{1}{2}g_{ab}{E}(\tilde{\omega}^a\tilde{\omega}^b) + \frac{1}{2n^2}g_{\kappa \lambda}{E}(\hat{\beta}^\kappa\hat{\beta}^\lambda)
\\
&~~
+ \left(\frac{1}{2}\mGamma{abc}{}-\frac{1}{3}T_{abc}\right){E}
 (\tilde{\omega}^a\tilde{\omega}^b\tilde{\omega}^c)\\
&~~+ \frac{1}{n}\left\{\frac{1}{2}(\mGamma{ab\kappa}{}+\mGamma{a\kappa b}{}+\mGamma{\kappa ab}{}) - T_{ab\kappa}\right\}{E}(\tilde{\omega}_{\rm MLE}^a\tilde{\omega}_{\rm MLE}^b\hat{\beta}^\kappa)\\
&~~+ ({\rm terms~independent~of}~\alpha,\beta) + {\rm o}(n^{-2}).
\end{align*}

Because $\hat{\beta}$ is a smooth function of $\rm{O}_p(1)$ and $\hat{\beta} = \beta + {\rm o}_p(1)$,
\begin{align*}
{E}(\hat{\beta}^\kappa\hat{\beta}^\lambda) &= \beta^\kappa \beta^\lambda + {\rm o}(1),\\
{E}(\tilde{\omega}_{\rm MLE}^a\tilde{\omega}_{\rm MLE}^b\hat{\beta}^\kappa) &= \frac{1}{n}g^{ab}\beta^\kappa + {\rm o}(n^{-1})
\end{align*}
hold and
\begin{align*}
&{E}(\tilde{\omega}^a\tilde{\omega}^b\tilde{\omega}^c)\\
&= \left.({\alpha}^a {E}(\tilde{\omega}_{\rm MLE}^b\tilde{\omega}_{\rm MLE}^c) + {\alpha}^b {E}(\tilde{\omega}_{\rm MLE}^c\tilde{\omega}_{\rm MLE}^a)
\right. \\
& \left. ~~\hspace{0.2mm}
 + {\alpha}^c {E}(\tilde{\omega}_{\rm MLE}^a\tilde{\omega}_{\rm MLE}^b) \right.)/n {+ ({\rm terms~independent~of~}\alpha, \beta)}
\\
& ~~~
+ {\rm o}(n^{-2})\\
&= ({\alpha}^a g^{bc} + {\alpha}^b g^{ca} + {\alpha}^c g^{ab} )/n^2
\\
& ~~~
+ ({\rm terms~independent~of~}\alpha, \beta) + {\rm o}(n^{-2}).
\end{align*}

Lastly we derive ${E}(\tilde{\omega}^a\tilde{\omega}^b)$.
It can be expanded as {(see e.g. (10.19) in \cite{efron1975}, as its multivariate version is presented here)}
\begin{align*}
{E}(\tilde{\omega}^a\tilde{\omega}^b) &= \frac{1}{n}g^{ab} + \frac{1}{n}(g^{ca}\partial_c {E}(\tilde{\omega}^b) + g^{cb}\partial_c {E}(\tilde{\omega}^a))\\
&~~+ {E}\left\{(\tilde{\omega}^a-g^{ac}\partial_c L)(\tilde{\omega}^b-g^{bd}\partial_d L)\right\}.
\end{align*}
From the likelihood equation,
\begin{align*}
&\partial_a L(\hat{\omega}_{\rm MLE}) = 0,\\
&
\partial_a L + \tilde{\omega}_{\rm MLE}^b \partial_{ab} L + \frac{1}{2}\tilde{\omega}_{\rm MLE}^c\tilde{\omega}_{\rm MLE}^d\partial_{acd} L + {\rm o}_p(n^{-1}) = 0,\\
&
g_{ab}\tilde{\omega}_{\rm MLE}^b = \partial_a L + \tilde{\omega}_{\rm MLE}^b(\partial_{ab} L + g_{ab})\\
&\hspace{1.7cm} + \frac{1}{2}\tilde{\omega}_{\rm MLE}^c\tilde{\omega}_{\rm MLE}^d\partial_{acd} L +  {\rm o}_p(n^{-1}).
\end{align*}
{
Because $\tilde{\omega}_{\rm MLE}^b = g^{ab}\partial_a L +{\rm O}_p(n^{-1})$ and $\tilde{\omega}_{\rm MLE} = \hat{\omega}_{\rm MLE}-\omega = \hat{\omega} - \alpha/n -\omega + \rm{o}_p(n^{-1})$,
}
\begin{align*}
g_{ab}\tilde{\omega}_{\rm MLE}^b =& \partial_a L + g^{cd}\partial_d L(\partial_{ac} L + g_{ac})\\
& + \frac{1}{2}\tilde{\omega}_{\rm MLE}^c\tilde{\omega}_{\rm MLE}^d\partial_{acd} L +  {\rm o}_p(n^{-1}),
\\
\tilde{\omega}^b =& \frac{\alpha^b}{n} + g^{ab}\partial_a L
+ g^{ab}g^{cd}\partial_d L(\partial_{ac} L + g_{ac})\\
& + \frac{1}{2}g^{ab}\tilde{\omega}_{\rm MLE}^c\tilde{\omega}_{\rm MLE}^d\partial_{acd} L +  {\rm o}_p(n^{-1}).
\end{align*}
Thus
\[
{E}(\tilde{\omega}^b) = \frac{\alpha^b}{n} + ({\rm terms~independent~of~}\alpha, \beta) + o(n^{-1})
\]
and
\begin{align*}
&{E}\{(\tilde{\omega}^a-g^{ac}\partial_c L)(\tilde{\omega}^b-g^{bd}\partial_d L)\}\\
&=
\frac{\alpha^a\alpha^b}{n^2}
+ \frac{\alpha^a}{n}
\times E \left[ g^{be}g^{cd}\partial_d L(\partial_{ce} L + g_{ce})
\right. \\
&\left.
~~+ \frac{1}{2}g^{be}\tilde{\omega}_{\rm MLE}^c\tilde{\omega}_{\rm MLE}^d\partial_{cde} L\right]
\\
&~~
 \hspace{0.9mm}+\frac{\alpha^b}{n}
E \left[ g^{ae}g^{cd}\partial_d L(\partial_{ce} L + g_{ce})
\right. \\
&\left.
~~+ \frac{1}{2}g^{ae}\tilde{\omega}_{\rm MLE}^c\tilde{\omega}_{\rm MLE}^d\partial_{cde} L\right]\\
&~~
 \hspace{0.9mm}+ ({\rm terms~independent~of~}\alpha, \beta) + {\rm o}(n^{-2}).
\end{align*}
{Using $E[\partial_d L\partial_{ce} L]=\eGamma{ecd}{}/n$ and $E[\partial_{cde} L]= -\eGamma{cde}{}-\eGamma{dec}{}-\eGamma{ecd}{}-T_{ecd}$, we have}
\begin{align*}
&{{E}\{(\tilde{\omega}^a-g^{ac}\partial_c L)(\tilde{\omega}^b-g^{bd}\partial_d L)\}}\\
&=~
\frac{\alpha^a\alpha^b}{n^2} + \frac{\alpha^a}{n^2}E \left[ g^{be}g^{cd}\eGamma{ecd}{} 
\right. \\
&\left.
~~+~\frac{1}{2}g^{be}g^{cd}(-\eGamma{cde}{}-\eGamma{dec}{}-\eGamma{ecd}{}-T_{ecd})\right]
\\
&~~~
+~\frac{\alpha^b}{n^2}E \left[ g^{ae}g^{cd}\eGamma{ecd}{} 
\right. \\
&\left.
~~+~\frac{1}{2}g^{ae}g^{cd}(-\eGamma{cde}{}-\eGamma{dec}{}-\eGamma{ecd}{}-T_{ecd})\right]\\
& ~~~+ ({\rm terms~independent~of~}\alpha, \beta) + {\rm o}(n^{-2})\\
&=~\frac{\alpha^a\alpha^b}{n^2} - \frac{\alpha^a g^{be}g^{cd}\mGamma{cde}{}}{2n^2} - \frac{\alpha^b g^{ae}g^{cd}\mGamma{cde}{}}{2n^2}\\
& ~~~+ ({\rm terms~independent~of~}\alpha, \beta)  + {\rm o}(n^{-2})
\end{align*}
{(in the last equation we use $g^{be}g^{cd}\eGamma{ecd}{} = g^{be}g^{cd}\eGamma{dec}{}$ and $g^{ae}g^{cd}\eGamma{ecd}{} = g^{ae}g^{cd}\eGamma{dec}{}$.)} Hence we obtain
\begin{align*}
&{E}(\tilde{\omega}^a\tilde{\omega}^b)\\
=& \frac{1}{n}g^{ab} + \frac{1}{n^2}\left(g^{ca}\partial_c \alpha^b + g^{cb}\partial_c \alpha^a\right) + \frac{\alpha^a\alpha^b}{n^2}\\
& - \frac{\alpha^a g^{be}g^{cd}\mGamma{cde}{}}{2n^2}- \frac{\alpha^b g^{ae}g^{cd}\mGamma{cde}{}}{2n^2}\\
&+ ({\rm terms~independent~of~}\alpha, \beta)  + {\rm o}(n^{-2}).
\end{align*}

Here,
\[
\mGamma{ab\kappa}{}=\mH{ab\kappa}{}
\]
and
\[
-\mGamma{b\kappa a}{}+T_{ab\kappa}=\mH{ab\kappa}{}
\]
hold.
The first one follows from the definition, and the second one is from the relation
\begin{align*}
&\partial_b g_{a\kappa}\\
&= \int \partial_a\partial_b p \frac{\partial_\kappa p}{p}dy + \int \partial_b\partial_\kappa p \frac{\partial_a p}{p}dy - \int \frac{\partial_a p \partial_\kappa p \partial_b p}{p^2}dy\\
&= \mGamma{ab\kappa}{} + \mGamma{b\kappa a}{} - T_{ab\kappa}
\end{align*}
and $g_{a\kappa}=0$ {($a=1,\dots,d,~\kappa=d+1,\dots,m$).}

Therefore, the risk is expanded as follows:
\begin{align*}
&{E}[D\{p(y;\omega), p_{\alpha,\beta}(y;\hat{\omega}_{\rm MLE})\}]\\
&= \frac{d}{2n} + \frac{g_{ab}}{2n^2}(g^{ca}\partial_c \alpha^b + g^{cb}\partial_c \alpha^a) + \frac{g_{ab}}{2n^2}\alpha^a\alpha^b
\\ &~~~
+\alpha^a\frac{g^{bc}}{2n^2}
\left(\mGamma{abc}{}+\mGamma{bca}{}+\mGamma{cab}{}-2T_{abc} \right)\\
&~~~- \frac{g_{ab}}{2n^2}\alpha^a g^{be}g^{cd}\mGamma{cde}{}+ \frac{g_{\kappa \lambda}}{2n^2}{\beta}^\kappa{\beta}^\lambda
\\ &~~~
+ \beta^\kappa\frac{g^{ab}}{2n^2}\left(\mGamma{\kappa ab}{}+\mGamma{b\kappa a}{}+\mGamma{ab\kappa}{}-2T_{ab\kappa} \right)
\\
 &~~~+ ({\rm terms~independent~of~}\alpha, \beta) + {\rm o}(n^{-2})\\
&= \frac{d}{2n} +\frac{g_{ab}}{2n^2}\alpha^a\alpha^b + \frac{1}{n^2}\left(\alpha^a \eGamma{abc}{}g^{bc}+\partial_a\alpha^a \right)
\\ &~~~
 + \frac{g_{\kappa\lambda}}{2n^2}\beta^\kappa\beta^\lambda-\frac{1}{2n^2}\mH{ab\kappa}{}g^{ab}\beta^\kappa\\
&~~~+ ({\rm terms~independent~of~}\alpha, \beta) + {\rm o}(n^{-2}).
\end{align*}
%

\section*{Acknowledgment}
The authors greatly appreciate the referees' comments on an earlier version.
The authors are grateful to Tomonari Sei for helpful comments.
This paper is based on a part of the first author's  Ph.D. thesis done at the University of Tokyo.

\ifCLASSOPTIONcaptionsoff
  \newpage
\fi



%
\bibliographystyle{IEEEtran}
\bibliography{extended-ref}

\begin{thebibliography}{10}
\providecommand{\url}[1]{#1}
\csname url@samestyle\endcsname
\providecommand{\newblock}{\relax}
\providecommand{\bibinfo}[2]{#2}
\providecommand{\BIBentrySTDinterwordspacing}{\spaceskip=0pt\relax}
\providecommand{\BIBentryALTinterwordstretchfactor}{4}
\providecommand{\BIBentryALTinterwordspacing}{\spaceskip=\fontdimen2\font plus
\BIBentryALTinterwordstretchfactor\fontdimen3\font minus
  \fontdimen4\font\relax}
\providecommand{\BIBforeignlanguage}[2]{{%
\expandafter\ifx\csname l@#1\endcsname\relax
\typeout{** WARNING: IEEEtran.bst: No hyphenation pattern has been}%
\typeout{** loaded for the language `#1'. Using the pattern for}%
\typeout{** the default language instead.}%
\else
\language=\csname l@#1\endcsname
\fi
#2}}
\providecommand{\BIBdecl}{\relax}
\BIBdecl

\bibitem{aitchison1975}
J.~Aitchison, ``Goodness of prediction fit,'' \emph{Biometrika}, vol.~62,
  no.~3, pp. 547--554, Dec. 1975.

\bibitem{xu2011}
X.~Xu and D.~Zhou, ``Empirical {B}ayes predictive densities for
  high-dimensional normal models,'' \emph{J. Multivariate Anal.}, vol. 102,
  no.~10, pp. 1417--1428, Nov. 2011.

\bibitem{hunter2006}
D.~R. Hunter and M.~S. Handcock, ``Inference in curved exponential family
  models for networks,'' \emph{J. Comput. Graph. Statist.}, vol.~15, no.~3, pp.
  565--583, 2006.

\bibitem{ravishanker1990}
N.~Ravishanker, E.~L. Melnick, and C.~L. Tsai, ``Differential geometry of
  {ARMA} models,'' \emph{J. Time Series Anal.}, vol.~11, no.~3, pp. 259--274,
  May 1990.

\bibitem{kuchler2006}
U.~K{\"u}chler and M.~S{\o}rensen, \emph{Exponential families of stochastic
  processes}.\hskip 1em plus 0.5em minus 0.4em\relax New York: Springer, 1997.

\bibitem{barron1991}
A.~R. Barron and C.~H. Sheu, ``Approximation of density functions by sequences
  of exponential families,'' \emph{Ann. Statist.}, vol.~19, no.~3, pp.
  1347--1369, Sep. 1991.

\bibitem{amari1985}
S.~Amari, \emph{Differential-Geometrical Methods in Statistics}.\hskip 1em plus
  0.5em minus 0.4em\relax New York: Springer-Verlag, 1985.

\bibitem{hartigan1998}
J.~A. Hartigan, ``The maximum likelihood prior,'' \emph{{Ann. Statist.}},
  vol.~26, pp. 2083--2103, 1998.

\bibitem{barron1992}
A.~R. Barron, L.~Gyorfi, and E.~C. van~der Meulen, ``Distribution estimation
  consistent in total variation and in two types of information divergence,''
  \emph{IEEE Trans. Inf. Theory}, vol.~38, no.~5, pp. 1437--1454, Sep. 1992.

\bibitem{fisher1973}
R.~A. Fisher, \emph{Statistical Methods and Scientific Inference},
  3rd~ed.\hskip 1em plus 0.5em minus 0.4em\relax New York: Hafner, 1973.

\bibitem{komaki1996}
F.~Komaki, ``On asymptotic properties of predictive distributions,''
  \emph{Biometrika}, vol.~83, no.~2, pp. 299--313, Jun. 1996.

\bibitem{johnstone2001}
I.~M. Johnstone, ``On the distribution of the largest eigenvalue in principal
  components analysis,'' \emph{Ann. Statist.}, vol.~29, no.~2, pp. 295--327,
  Apr. 2001.

\bibitem{kass1997}
R.~E. Kass and P.~W. Vos, \emph{Geometrical Foundations of Asymptotic
  Inference}.\hskip 1em plus 0.5em minus 0.4em\relax New York: Wiley, 1997.

\bibitem{tierney1986}
L.~Tierney and J.~B. Kadane, ``Accurate approximations for posterior moments
  and marginal densities,'' \emph{J. Amer. Statist. Assoc.}, vol.~81, no. 393,
  pp. 82--86, Mar. 1986.

\bibitem{efron1975}
B.~Efron, ``Defining curvature of a statistical problem (with applications
  second order efficiency),'' \emph{Ann. Statist.}, vol.~3, no.~6, pp.
  1189--1242, Nov. 1975.

\end{thebibliography}


\begin{thebibliography}{99}

\bibitem{IEEEhowto:kopka}
H.~Kopka and P.~W. Daly, \emph{A Guide to \LaTeX}, 3rd~ed.\hskip 1em plus
  0.5em minus 0.4em\relax Harlow, England: Addison-Wesley, 1999.

\end{thebibliography}
\if0

\fi

%
\if0
\begin{IEEEbiography}{Michael Shell}
Biography text here.
\end{IEEEbiography}

\begin{IEEEbiographynophoto}{John Doe}
Biography text here.
\end{IEEEbiographynophoto}


\begin{IEEEbiographynophoto}{Jane Doe}
Biography text here.
\end{IEEEbiographynophoto}
\fi


\begin{IEEEbiographynophoto}{Michiko Okudo}
received the B.E., M.S. and Ph.D. degrees from the
University of Tokyo in 2015, 2017 and 2020, respectively. She is currently
an Assistant Professor with Department of Mathematical Informatics, the
University of Tokyo.
\end{IEEEbiographynophoto}
\enlargethispage{-6.9in}

\begin{IEEEbiographynophoto}{Fumiyasu Komaki}
(M'00) received the B.Eng. and M.Eng. degrees in mathematical engineering both from the University of Tokyo, Japan, in 1987 and 1989, respectively. He received the Ph.D. degree in statistical science in 1992 from the Graduate University for Advanced Studies, Japan. He is currently a Professor with the Department of Mathematical Informatics, the University of Tokyo, and a Unit Leader at RIKEN Center for Brain Science. His interests include prediction theory, Bayesian theory, information geometry, and statistical modeling.
\end{IEEEbiographynophoto}


\end{document}